\documentclass[11pt, reqno]{amsart}

\newcommand{\chapter}{\part}
\usepackage[usenames,dvipsnames]{xcolor}

\usepackage[OT2, T1]{fontenc}
\usepackage{url}
\usepackage{amsmath}
\usepackage{array}
\usepackage{graphicx}
\usepackage{amssymb}
\usepackage{amsthm}
\definecolor{link}{RGB}{11,0,128}
\usepackage[colorlinks=true,citecolor=NavyBlue,linkcolor=OliveGreen,urlcolor=link]{hyperref}
\usepackage{hyperref}
\usepackage{paralist}
\usepackage{colonequals}			
\usepackage{color}
\usepackage{mathabx}			
\usepackage{amscd}
\usepackage[all,cmtip]{xy}			
\usepackage{sseq}				
\usepackage{verbatim}
\usepackage{parskip}
\usepackage{microtype}
\usepackage[in]{fullpage}
\usepackage{mathrsfs} 			
\usepackage{etoolbox}
\usepackage{enumitem}
\usepackage{moreenum}			
\usepackage[alphabetic,lite]{amsrefs} 	
\usepackage{stmaryrd}			
\usepackage[foot]{amsaddr}
\usepackage{subfiles}
\usepackage{ragged2e}
\usepackage{tikz}
\usetikzlibrary{graphs}
\usetikzlibrary{cd}
\usepackage{stackengine}
\usepackage{lipsum}
\usepackage{cleveref}

\calclayout
\newcommand{\cE}{\mathcal{E}}
\newcommand{\cF}{\mathcal{F}}

\newcommand{\cL}{\mathcal{L}}
\newcommand{\cM}{\mathcal{M}}

\newcommand{\cO}{\mathcal{O}}

\newcommand{\cS}{\mathcal{S}}
\newcommand{\cT}{\mathcal{T}}

\newcommand{\cX}{\mathcal{X}}
\newcommand{\cY}{\mathcal{Y}}

\usepackage{relsize}
\usepackage[bbgreekl]{mathbbol}
\usepackage{amsfonts}

\DeclareSymbolFontAlphabet{\mathbb}{AMSb}
\DeclareSymbolFontAlphabet{\mathbbl}{bbold}

\DeclareMathOperator{\Ani}{Ani}		
\DeclareMathOperator{\Jac}{Jac}		

\DeclareSymbolFont{cyrletters}{OT2}{wncyr}{m}{n}
\DeclareMathSymbol{\Sha}{\mathalpha}{cyrletters}{"58}	
\DeclareMathOperator{\Spa}{Spa}		
\DeclareMathOperator{\Spec}{Spec}

\newcommand{\et}{\mathrm{\acute{e}t}}	

\newcommand{\surjects}{\twoheadrightarrow}
\newcommand{\tensor}{\otimes} 			

\providecommand{\up}[1]{{\upshape(}#1{\upshape)}}

\renewcommand{\b}{\textbf}

\newcommand{\brems}{\begin{rems} \hfill \begin{enumerate}[label=\b{\thenumberingbase.},ref=\thenumberingbase]}

\newcommand{\erems}{\end{enumerate} \end{rems}}
\newcommand{\begs}{\begin{egs} \hfill \begin{enumerate}[label=\b{\thenumberingbase.},ref=\thenumberingbase]}

\newcommand{\eegs}{\end{enumerate} \end{egs}}

\newcommand{\bsm}{\begin{smallmatrix}}
\newcommand{\esm}{\end{smallmatrix}}
\newcommand{\blem}{\begin{lemma}}
\newcommand{\elem}{\end{lemma}}

\newcommand{\bconj}{\begin{conj}}
\newcommand{\econj}{\end{conj}}
\newcommand{\bprob}{\begin{Problem}}
\newcommand{\eprob}{\end{Problem}}

\newcommand{\bq}{\begin{Q}}
\newcommand{\eq}{\end{Q}}
\newcommand{\benum}{\begin{enumerate}[label={{\upshape(\alph*)}}]}
\newcommand{\benuma}{\begin{enumerate}[label={{\upshape(\arabic*)}}]}
\newcommand{\benumb}{\begin{enumerate}[label={{\upshape\b{\arabic*.}}}]}
\newcommand{\benumr}{\begin{enumerate}[label={{\upshape(\roman*)}}]}
\newcommand{\eenum}{\end{enumerate}}
\newcommand{\bitem}{\begin{itemize}}
\newcommand{\eitem}{\end{itemize}}
\newcommand{\bc}{}
\newcommand{\bd}{\begin{defn}}
\newcommand{\ed}{\end{defn}}

\newcommand{\beg}{\begin{eg}}
\newcommand{\eeg}{\end{eg}}

\newcommand{\bcl}{\begin{claim}}
\newcommand{\ecl}{\end{claim}}

\newcommand{\ba}{\begin{aligned}}
\newcommand{\ea}{\end{aligned}}
\newcommand{\be}{\begin{equation}}
\newcommand{\ee}{\end{equation}}
\newcommand{\bpf}{\begin{proof}}
\newcommand{\epf}{\end{proof}}
\newcommand{\bthm}{\begin{thm}}
\newcommand{\ethm}{\end{thm}}
\newcommand{\bprop}{\begin{prop}}
\newcommand{\eprop}{\end{prop}}
\newcommand{\bcor}{\begin{cor}}
\newcommand{\ecor}{\end{cor}}
\newcommand{\brem}{\begin{rem}}
\newcommand{\erem}{\end{rem}}

\newcounter{numberingbase}[subsection]

\makeatletter
\newcommand{\nbprefix}{%
  \ifnum\value{subsection}>0 \thesubsection \else \thesection \fi
}
\makeatother
\renewcommand{\thenumberingbase}{\nbprefix.\arabic{numberingbase}}

\numberwithin{equation}{section}

\crefname{numberingbase}{Item}{Items}
\Crefname{numberingbase}{Item}{Items}

\newtheoremstyle{thms}{0.5em}{0.5em}{\itshape}{}{\bfseries}{.}{ }{}
\newtheoremstyle{claims}{0.5em}{0.5em}{}{}{\itshape}{.}{ }{}
\newtheoremstyle{defs}{0.5em}{0.5em}{}{}{\bfseries}{.}{ }{}

\theoremstyle{thms}

\newtheorem{thmX}{Theorem}[subsection]
\renewcommand{\thethmX}{\thenumberingbase}
\newenvironment{thm}{%
  \refstepcounter{numberingbase}\setcounter{thmX}{\value{numberingbase}}%
  \begin{thmX}}{\end{thmX}}
\crefname{thmX}{Theorem}{Theorems}
\Crefname{thmX}{Theorem}{Theorems}

\newtheorem{theoremX}{Theorem}[subsection]

\crefname{theoremX}{Theorem}{Theorems}
\Crefname{theoremX}{Theorem}{Theorems}

\newtheorem{TheoremX}{Theorem}[subsection]

\crefname{TheoremX}{Theorem}{Theorems}
\Crefname{TheoremX}{Theorem}{Theorems}

\newtheorem{conjX}{Conjecture}[subsection]

\newenvironment{conj}{%
  \refstepcounter{numberingbase}\setcounter{conjX}{\value{numberingbase}}%
  \begin{conjX}}{\end{conjX}}
\crefname{conjX}{Conjecture}{Conjectures}
\Crefname{conjX}{Conjecture}{Conjectures}

\newtheorem{corollaryX}{Corollary}[subsection]

\crefname{corollaryX}{Corollary}{Corollaries}
\Crefname{corollaryX}{Corollary}{Corollaries}

\newtheorem{corX}{Corollary}[subsection]

\newenvironment{cor}{%
  \refstepcounter{numberingbase}\setcounter{corX}{\value{numberingbase}}%
  \begin{corX}}{\end{corX}}
\crefname{corX}{Corollary}{Corollaries}
\Crefname{corX}{Corollary}{Corollaries}

\newtheorem{CorollaryX}{Corollary}[subsection]

\crefname{CorollaryX}{Corollary}{Corollaries}
\Crefname{CorollaryX}{Corollary}{Corollaries}

\newtheorem{lemmaX}{Lemma}[subsection]

\newenvironment{lemma}{%
  \refstepcounter{numberingbase}\setcounter{lemmaX}{\value{numberingbase}}%
  \begin{lemmaX}}{\end{lemmaX}}
\crefname{lemmaX}{Lemma}{Lemmas}
\Crefname{lemmaX}{Lemma}{Lemmas}

\newtheorem{LemmaX}{Lemma}[subsection]

\crefname{LemmaX}{Lemma}{Lemmas}
\Crefname{LemmaX}{Lemma}{Lemmas}

\newtheorem{lemX}{Lemma}[subsection]

\crefname{lemX}{Lemma}{Lemmas}
\Crefname{lemX}{Lemma}{Lemmas}

\newtheorem{propX}{Proposition}[subsection]
\renewcommand{\thepropX}{\thenumberingbase}
\newenvironment{prop}{%
  \refstepcounter{numberingbase}\setcounter{propX}{\value{numberingbase}}%
  \begin{propX}}{\end{propX}}
\crefname{propX}{Proposition}{Propositions}
\Crefname{propX}{Proposition}{Propositions}

\newtheorem{PropositionX}{Proposition}[subsection]

\crefname{PropositionX}{Proposition}{Propositions}
\Crefname{PropositionX}{Proposition}{Propositions}

\newtheorem{propositionX}{Proposition}[subsection]

\crefname{propositionX}{Proposition}{Propositions}
\Crefname{propositionX}{Proposition}{Propositions}

\newtheorem{QX}{Question}[subsection]

\newenvironment{Q}{%
  \refstepcounter{numberingbase}\setcounter{QX}{\value{numberingbase}}%
  \begin{QX}}{\end{QX}}
\crefname{QX}{Question}{Questions}
\Crefname{QX}{Question}{Questions}

\newtheorem{variantX}{Variant}[subsection]

\crefname{variantX}{Variant}{Variants}
\Crefname{variantX}{Variant}{Variants}
\newtheorem{quesX}{Question}[subsection]

\newenvironment{ques}{%
  \refstepcounter{numberingbase}\setcounter{quesX}{\value{numberingbase}}%
  \begin{quesX}}{\end{quesX}}
\crefname{quesX}{Question}{Questions}
\Crefname{quesX}{Question}{Qeustions}

\newtheorem{goalX}{Goal}[subsection]

\crefname{goalX}{Goal}{Goals}
\Crefname{goalX}{Goal}{Goals}

\newtheorem{defpropX}{Definition/Proposition}[subsection]

\crefname{defpropX}{Definition/Proposition}{Definition/Propositions}
\Crefname{defpropX}{Definition/Proposition}{Definition/Propositions}

\theoremstyle{defs}

\newtheorem{defnX}{Definition}[subsection]

\newenvironment{defn}{%
  \refstepcounter{numberingbase}\setcounter{defnX}{\value{numberingbase}}%
  \begin{defnX}}{\end{defnX}}
\crefname{defnX}{Definition}{Definitions}
\Crefname{defnX}{Definition}{Definitions}

\newtheorem{definitionX}{Definition}[subsection]

\newenvironment{definition}{%
  \refstepcounter{numberingbase}\setcounter{definitionX}{\value{numberingbase}}%
  \begin{definitionX}}{\end{definitionX}}
\crefname{definitionX}{Definition}{Definitions}
\Crefname{definitionX}{Definition}{Definitions}

\newtheorem{ConstructionX}{Construction}[subsection]

\crefname{ConstructionX}{Construction}{Constructions}
\Crefname{ConstructionX}{Construction}{Constructions}
\newtheorem*{strategy*}{Strategy}

\newtheorem{propconsX}{Proposition/Construction}[subsection]

\crefname{propconsX}{Proposition/Construction}{Proposition/Constructions}
\Crefname{propconsX}{Proposition/Construction}{Proposition/Constructions}

\newtheorem{defconsX}{Definition/Construction}[subsection]

\crefname{defconsX}{Definition/Construction}{Definition/Constructions}
\Crefname{defconsX}{Definition/Construction}{Definition/Constructions}

\newtheorem{cautionX}{Caution}[subsection]

\crefname{cautionX}{Caution}{Cautions}
\Crefname{cautionX}{Caution}{Cautions}

\newtheorem{egX}{Example}[subsection]

\newenvironment{eg}{%
  \refstepcounter{numberingbase}\setcounter{egX}{\value{numberingbase}}%
  \begin{egX}}{\end{egX}}
\crefname{egX}{Example}{Examples}
\Crefname{egX}{Example}{Examples}

\newtheorem{remX}{Remark}[subsection]

\newenvironment{rem}{%
  \refstepcounter{numberingbase}\setcounter{remX}{\value{numberingbase}}%
  \begin{remX}}{\end{remX}}
\crefname{remX}{Remark}{Remarks}
\Crefname{remX}{Remark}{Remarks}

\newtheorem{RemarkX}{Remark}[subsection]

\crefname{RemarkX}{Remark}{Remarks}
\Crefname{RemarkX}{Remark}{Remarks}

\newtheorem{remarkX}{Remark}[subsection]

\crefname{remarkX}{Remark}{Remarks}
\Crefname{remarkX}{Remark}{Remarks}

\newtheorem*{egs}{Examples}
\newtheorem*{rems}{Remarks}

\theoremstyle{claims}
\newtheorem{claim}[equation]{Claim}

\Crefname{claim}{Claim}{Claims}
\Crefname{bclaim}{Claim}{Claims}
\Crefname{sublemma}{Lemma}{Lemmas}

\Crefname{thmenumi}{Theorem}{Theorems}
\AtBeginEnvironment{thmX}{%
  \crefalias{enumi}{thmenumi}%
  \setlist[enumerate,1]{label={\textit{(\arabic*)}},ref={\thethmX.(\arabic*)}}}

\Crefname{propenumi}{Proposition}{Propositions}
\AtBeginEnvironment{propX}{%
  \crefalias{enumi}{propenumi}%
  \setlist[enumerate,1]{label={\textit{(\arabic*)}},ref={\thepropX.(\arabic*)}}}

\newtheoremstyle{subsection-tweak}
   {2pt}
   {3pt}%
   {}
   {}%
   {\bfseries}
   {}%
   {.5em}
   {\thmnumber{\@{#1}{}\@{#2}.}%
    \thmnote{~{\bfseries#3.}}}    
    
\theoremstyle{subsection-tweak}
\newtheorem{pp}[numberingbase]{}
\newcommand{\bpp}{\begin{pp}}
\newcommand{\epp}{\end{pp}}

\theoremstyle{subsection-tweak}
\newtheorem{pp-tweak}[subsection]{}

\makeatletter
\def\@tocline#1#2#3#4#5#6#7{
    \begingroup 
    \@ifempty{#4}{}{}

    \parindent\z@ \leftskip#3\relax \advance\leftskip\@tempdima\relax
    #5\hskip-\@tempdima
      \ifcase #1
       \or\or \hskip 2em \or \hskip 1em \else \hskip 3em \fi%
      #6\nobreak\relax
    \dotfill\hbox to\@pnumwidth{\@tocpagenum{#7}}\par
    \nobreak
    \endgroup
 }
 \def\l@section{\@tocline{1}{0pt}{1pc}{}{}}

\renewcommand{\tocsection}[3]{%
  \indentlabel{\@ifnotempty{#2}{\makebox[1.3em][l]{%
    \ignorespaces#1 \bfseries{#2}.\hfill}}}\bfseries{#3}
    \vspace{-5pt}}

\renewcommand{\tocsubsection}[3]{%
  \indentlabel{\@ifnotempty{#2}{\hspace*{-0.5em}\makebox[2.1em][l]{%
    \ignorespaces#1#2.\hfill}}}#3
    \vspace{-5pt}}
   
\makeatother 

\makeatletter 
\newcommand\appendix@section[1]{%
  \refstepcounter{section}%
  \orig@section*{Appendix \@Alph\c@section. #1}%
}
\let\orig@section\section
\g@addto@macro\appendix{\let\section\appendix@section}
\makeatother
\makeatletter
\def\l@subsubsection{\@tocline{3}{1.8em}{3.2em}{\small}{}}
\makeatother

\title{A Note on Hodge theoretic anabelian geometry}
\author{qixiang wang}
\date{October 2025}

\begin{document}

\maketitle
\begin{abstract}
Grothendieck’s anabelian conjectures predict that certain classes of varieties over number fields are largely determined by their étale fundamental groups. A theorem of Mochizuki shows that for hyperbolic curves over number fields or $p$-adic fields, dominant morphisms bijectively correspond to open homomorphisms between their étale fundamental groups.

Motivated by non-abelian Hodge theory, we formulate a Hodge-theoretic version of anabelian conjecture in which the Galois action is replaced by the natural $\mathbb{C}^*$-action on the pro-algebraic completion of the fundamental group arising from non-abelian Hodge theory. In particular, we prove a Hodge theoretic analog of Mochizuki's theorem for smooth projective hyperbolic curves over $\mathbb{C}$. We also obtain a higher-dimensional analogue for complex hyperbolic manifolds of ball quotient type and discuss possible extensions to non-$K(\pi,1)$ spaces replacing fundamental groups by homotopy types.
\end{abstract}
\section{Introduction}
Over a number field $K$, the arithmetic geometry of hyperbolic curves (i.e., smooth proper curves of genus $\geq 2$) is extremely rich. One famous example is the Mordell conjecture (now Faltings' theorem), which asserts that hyperbolic curves have only finitely many rational points. Its proof, together with proofs of its geometric analogue (the Mordell conjecture for hyperbolic curves over function fields), suggests a deeper connection between arithmetic geometry and hyperbolic geometry.

One crucial feature of hyperbolic geometry is that hyperbolic manifolds (especially in higher dimensions) have complicated fundamental groups---sufficiently complicated to capture most of the geometric information. More precisely, the Mostow rigidity theorem states:

\begin{thm}[Mostow rigidity]
    Let $X$ and $Y$ be compact hyperbolic manifolds with $\dim X,\dim Y>2$. Then $\pi_1(X)\simeq \pi_1(Y)$ if and only if $X\simeq Y$ as Riemannian manifolds. Moreover,
    \[
      \mathrm{Isom}_{\mathrm{gp}}\bigl(\pi_1(X),\pi_1(Y)\bigr)\simeq \mathrm{Isom}(X,Y).
    \]
\end{thm}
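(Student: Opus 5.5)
The plan is to follow Mostow's original strategy, organized in four steps: build a quasi-isometry from the group isomorphism, extend it to the sphere at infinity, show that boundary map is conformal, and conclude it comes from an isometry. Throughout, write $X=\Gamma\backslash \mathbb{H}^n$ and $Y=\Lambda\backslash \mathbb{H}^n$ with $n\ge 3$. Here $\Gamma,\Lambda\subset \mathrm{Isom}(\mathbb{H}^n)$ are cocompact torsion-free lattices, and we fix a group isomorphism $\phi\colon\Gamma\to\Lambda$.

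\textbf{Step 1: a quasi-isometry.} Both $X$ and $Y$ are aspherical, so $\phi$ is induced by a homotopy equivalence $f\colon X\to Y$. Lift $f$ to a $\phi$-equivariant map $\tilde f\colon \mathbb{H}^n\to\mathbb{H}^n$. Compactness of $X$ and $Y$ gives that $\tilde f$ is a quasi-isometry: for instance, both spaces are quasi-isometric to $\Gamma\cong\Lambda$ with a word metric, by Švarc–Milnor.

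\textbf{Step 2: extension to the boundary.} By the Morse lemma, quasi-geodesics in $\mathbb{H}^n$ stay within bounded distance of geodesics. Hence $\tilde f$ extends to a $\phi$-equivariant homeomorphism $\partial\tilde f\colon S^{n-1}\to S^{n-1}$. Using the Morse lemma again, one shows this boundary map is quasi-conformal.

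\textbf{Step 3: conformality (the main obstacle).} The hard part is proving that $\partial\tilde f$ is actually conformal, i.e. a Möbius transformation. I would argue as follows.
\begin{itemize}
\item For $n-1\ge 2$, quasi-conformal maps are differentiable almost everywhere (Gehring, Mostow). Their derivative is nonsingular almost everywhere, which gives a measurable field of ellipses on $S^{n-1}$.
\item This field is $\Gamma$-invariant, since $\Gamma$ and $\Lambda$ act conformally.
\item The $\Gamma$-action on $S^{n-1}$ is ergodic. This follows from ergodicity of the geodesic flow on the compact quotient $X$, via the Hopf argument or Moore's ergodicity theorem applied to $\Gamma\backslash G/P$.
\item By ergodicity, the eccentricity of the ellipses is almost everywhere constant. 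If the field were not almost everywhere round, it would define a measurable $\Gamma$-invariant direction field. Using the dynamics of loxodromic elements of $\Gamma$, this can be shown to be impossible.
\end{itemize}
So $\partial\tilde f$ is $1$-quasiconformal. For $n\ge 3$, Liouville's theorem then forces it to be Möbius. This is exactly where $\dim>2$ enters: in dimension $2$ the boundary is $S^1$, Liouville fails, and Teichmüller space shows rigidity fails too.

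\textbf{Step 4: conclusion.} A Möbius map $g$ of $S^{n-1}$ is the boundary value of an isometry $g$ of $\mathbb{H}^n$. Equivariance on the boundary gives $g\gamma g^{-1}=\phi(\gamma)$, since an isometry is determined by its boundary action. Hence $g$ descends to an isometry $X\to Y$ inducing $\phi$, up to the choice of basepoint and inner automorphism. Conversely, an isometry induces an isomorphism on $\pi_1$.

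For the bijection $\mathrm{Isom}_{\mathrm{gp}}(\pi_1X,\pi_1Y)\simeq\mathrm{Isom}(X,Y)$, read $\mathrm{Isom}_{\mathrm{gp}}$ as isomorphisms modulo inner automorphisms (i.e. outer), or work with basepointed spaces. Injectivity also needs an argument: an isometry inducing an inner automorphism is the identity. It lifts to an element $g$ of $\mathrm{Isom}(\mathbb{H}^n)$ commuting with $\Gamma$ up to conjugation. Since $\Gamma$ is Zariski dense (Borel density) and has trivial centralizer, $g$ lies in $\Gamma$, so the isometry is trivial on $X$.
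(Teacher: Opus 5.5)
The paper gives no proof of this statement. Mostow rigidity is only quoted in the introduction as classical motivation, so your proposal has to stand on its own.

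Its architecture is Mostow's: an equivariant quasi-isometry, a quasiconformal boundary map, conformality, then Poincar\'e extension. Steps 1, 2 and 4 are fine. So is your reading of the ``moreover'' clause modulo inner automorphisms: taken literally it is false, since $\mathrm{Inn}(\pi_1X)\subset\mathrm{Aut}(\pi_1X)$ is infinite while $\mathrm{Isom}(X)$ is finite. Your centralizer argument for injectivity also works. One small omission: you use the same $\mathbb{H}^n$ for both spaces, so say first why $\pi_1(X)\simeq\pi_1(Y)$ forces $\dim X=\dim Y$. For instance, the fundamental group of a closed aspherical manifold has cohomological dimension equal to the dimension.

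The genuine gap is the last bullet of Step 3. That step is the whole content of the theorem, and it is only asserted.

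\emph{Why your stated mechanism does not work.} Ergodicity of $\Gamma$ on $S^{n-1}$ only makes the eccentricity a.e.\ constant; it does not make it equal to $1$. Dynamics of loxodromic elements cannot by themselves exclude a measurable invariant line field (for $n\ge 4$, a field of $k$-planes). In the upper half-space model, $z\mapsto\lambda Rz$ preserves the radial field $z\mapsto\mathbb{R}z$. Nonelementary Schottky-type groups preserve many non-round fields supported off their limit set. So the argument must use that $\Gamma$ is a cocompact lattice, i.e.\ that every boundary point is conical.

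\emph{A standard way to close the gap: the zooming argument.}
\begin{enumerate}
\item Let $x$ be a point of approximate continuity of the ellipse field $E$; almost every point is one.
\item Let $M_k$ be the dilations by $r_k\to 0$ centred at $x$. By cocompactness, the points $M_k(p)$ lie within $\mathrm{diam}(X)$ of $\Gamma p$. Hence $M_k=\gamma_k C_k$ with $\gamma_k\in\Gamma$ and $C_k$ in a fixed compact subset of $\mathrm{Isom}(\mathbb{H}^n)$. Pass to a subsequence with $C_k\to C$.
\item Invariance of $E$ gives $M_k^*E=C_k^*E\to C^*E$ in measure. Approximate continuity gives $M_k^*E\to E_x$, the constant field, in measure on compacta of $\mathbb{R}^{n-1}$.
\item Hence $E$ agrees a.e.\ with the M\"obius image of a constant field, and that constant field is invariant under $C^{-1}\Gamma C$.
\item Take a M\"obius map $h$ with $h(\infty)\neq\infty$. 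Then $Dh_y$ is a fixed similarity composed with a scalar times the reflection in $(y-c)^{\perp}$. If $h$ preserves the constant field, the stabilizer in $O(n-1)$ of the corresponding quadratic form contains all products of two reflections, hence $SO(n-1)$. So the form is round.
\item A cocompact lattice fixes no point of $S^{n-1}$, so such an $h$ exists in $C^{-1}\Gamma C$. Therefore $E$ is a.e.\ round.
\end{enumerate}

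You can also bypass the measurable field entirely. The same zooming, applied directly to $\partial\tilde f$ at a single point of differentiability with invertible derivative $L$, shows $\partial\tilde f=\mu\circ L\circ\nu$ with $\mu,\nu$ M\"obius. Since $L$ then conjugates a cocompact lattice into the M\"obius group, the same reflection argument forces $L$ to be conformal.

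With either argument inserted, the rest of your proof goes through, and the ergodicity bullet becomes unnecessary.
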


As an arithmetic analogue, Grothendieck proposed the \emph{anabelian geometry program}, which roughly predicts that the category of varieties over $K$ with ``complicated'' étale fundamental groups should embed fully faithfully into the category of profinite groups via the functor taking étale fundamental groups.

A concrete result in this direction is the following theorem (also known as the \emph{Hom conjecture}) due to S.\ Mochizuki.

\begin{thm}[\cite{anabelian}]\label{main}
    Let $K$ be a $p$-adic field or a number field, and let $G_{K}$ be the Galois group of $K$. Let $X$ and $Y$ be hyperbolic curves over $K$. Then the natural map 
    \[
      \mathrm{Hom}^{\mathrm{dom}}(X,Y)\longrightarrow \mathrm{Hom}_{G_K}^{\mathrm{open}}\bigl(\pi_1(X),\pi_1(Y)\bigr)
    \]
    is bijective. Here, $\mathrm{Hom}^{\mathrm{dom}}(-,-)$ is the set of dominant morphisms and $\mathrm{Hom}_{G_{K}}^{\mathrm{open}}\bigl(\pi_1(X),\pi_1(Y)\bigr)$ denotes the set of open homomorphisms between fundamental groups that are compatible with the fiber sequences
    \[
      1\longrightarrow \pi_1(\bar{Z})\longrightarrow\pi_1(Z)\longrightarrow G_{K}\longrightarrow 1 \qquad (Z=X \text{ or } Y).
    \]
\end{thm}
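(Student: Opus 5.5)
Injectivity is the formal part of the statement. Surjectivity carries the real content, and it reduces to a section-type statement for a target that is a $p$-adic local field. I would follow Mochizuki's strategy from \cite{anabelian}.

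First reduce to the $p$-adic case. If $K$ is a number field, choose a place $v$ over $p$. A $G_K$-compatible open homomorphism $\phi\colon\pi_1(X)\to\pi_1(Y)$ restricts, via the decomposition group $G_{K_v}\subset G_K$, to a $G_{K_v}$-compatible open homomorphism $\pi_1(X_{K_v})\to\pi_1(Y_{K_v})$. If the $p$-adic statement produces a morphism $X_{K_v}\to Y_{K_v}$ inducing this map, I would descend it to $K$. The morphism is determined by $\phi$ up to the injectivity below, so its graph is stable under $G_K$-conjugates. Injectivity itself is proved by a separate argument: two dominant morphisms inducing the same outer map agree on all finite étale covers, hence on Jacobians via the Kummer maps, and therefore agree by rigidity of maps from a curve to $\mathrm{Jac}(Y)$.

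Next reduce to the Hom-to-sections statement. For a $p$-adic $K$, take the graph $\Gamma_\phi\subset\pi_1(X)\times_{G_K}\pi_1(Y)=\pi_1(X\times_K Y)$. Pass to a generic point, or more precisely to the function field $K(X)$. Then $\phi$ gives a section of
\[
1\to\pi_1(Y_{\overline{K(X)}})\to\pi_1(Y_{K(X)})\to G_{K(X)}\to 1 .
\]
A $K(X)$-rational point of $Y$ is exactly a rational map, and hence a morphism, $X\to Y$. So it suffices to prove a ``section conjecture for geometrically generated sections'' over the function field $K(X)$. Openness of $\phi$ guarantees that the image is not too small, which will force the resulting point to give a dominant map.

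The core step is Mochizuki's $p$-adic Hodge theoretic input. I would use Faltings' $p$-adic Hodge theory and the Hodge--Tate comparison to show that the Galois representation $H^1_{\et}(Y_{\bar K},\mathbb{Q}_p)$, pulled back through $\phi$, is Hodge--Tate in a filtered sense. Concretely, $\phi$ induces a map $H^0(Y,\Omega^1)\otimes\mathbb{C}_p\to H^0(X,\Omega^1)\otimes\mathbb{C}_p$ compatible with Hodge--Tate decompositions, and this descends to $K$. Doing the same on all finite étale covers of $Y$, one gets a compatible system of pullbacks of differentials. These determine a map into projective space via canonical embeddings that are injective on points, with enough of them realized to separate points, and this map factors through $Y$.

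Concretely I would construct an algebraic map $X\to \mathrm{Jac}(Y)$ (or to Albanese varieties of covers) from the pullback of invariant differentials. I would then check, using the fact that $Y$ is cut out inside the product of Jacobians of its covers, that the map lands in $Y$, and that it induces $\phi$. The main obstacle is this last step: showing that the $p$-adic Hodge-theoretic data (the filtered $\mathbb{C}_p$-linear maps on differentials) actually come from an algebraic morphism of curves rather than merely of Jacobians, with the correct Galois compatibility. I expect this to require Mochizuki's \emph{$p$-adic Hom theorem} for function fields (the ``birational'' version of the statement), proved by a careful analysis of the Hodge--Tate filtration on $H^1$ of covers and of Kummer classes of functions.
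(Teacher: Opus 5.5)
The paper gives no proof of this theorem (it is quoted from \cite{anabelian}), so your outline has to stand on its own. It has a genuine gap exactly where the content of the theorem lies.

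Your injectivity argument is fine, and so is the reformulation as a statement about $G_K$-compatible maps $G_{K(X)}\to\pi_1(Y)$. Faltings' Hodge--Tate decomposition does give, for every open $H\subset\pi_1(Y)$, a linear map $H^0(Y_H,\Omega^1)\to H^0(X_H,\Omega^1)$. But these maps are only linear. For the rational map from $X_H$ to $\mathbb{P}(H^0(Y_H,\Omega^1)^\vee)$ that such a map defines to land on the canonical image of $Y_H$, the pullback must kill the kernel of $\mathrm{Sym}^2H^0(Y_H,\Omega^1)\to H^0(Y_H,(\Omega^1)^{\otimes 2})$ and the higher relations. This is not a consequence of $G_K$-equivariance on $H^1$ plus Hodge--Tate, which only see linear data. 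You also give no argument for how compatibility across the tower of covers would force it.

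Your fallback, building an algebraic map $X\to\Jac(Y)$ from pulled-back invariant differentials, fails for a sharper reason. Over a $p$-adic field, a $G_K$-equivariant map $T_p\Jac(X)\to T_p\Jac(Y)$ need not come from any homomorphism of abelian varieties, because Tate's isogeny theorem is false there. For example, elliptic curves over $\mathbb{Q}_p$ with good reduction and $a_p=0$ all have isomorphic $V_p$, yet they fall into uncountably many isogeny classes. Turning the cover-by-cover linear data into an algebraic morphism is the entire substance of Mochizuki's argument, and your outline contains no substitute for it.

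Instead you defer this step to ``Mochizuki's $p$-adic Hom theorem for function fields''. If you mean the Hom statement with source $\Spec K(X)$ (open $G_K$-compatible maps $G_{K(X)}\to\pi_1(Y)$ come from nonconstant $K(X)$-points of $Y$), that is exactly what your reduction arrived at, so the argument is circular. If you mean the birational statement about open maps $G_{K(X)}\to G_{K(Y)}$, it does not apply. Your $\phi$ only yields a map to $\pi_1(Y)$, and lifting it through $G_{K(Y)}\to\pi_1(Y)$ already presupposes that it is geometric. Either way surjectivity is not proved, and the intermediate steps add nothing beyond what the citation already provides.

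A smaller point concerns the number field case. There $G_K$ does not act on $X_{K_v}$, so ``the graph is stable under $G_K$-conjugates'' is meaningless as written. You should first note that a nonconstant morphism $X_{K_v}\to Y_{K_v}$ comes from one defined over a finite extension of $K$ inside $K_v$. This holds because nonconstant maps between hyperbolic curves form a finite étale $K$-scheme, by de Franchis and $H^0(X,F^*T_Y)=0$. Then use injectivity over $\bar K$ and the $G_K$-compatibility of $\phi$ to descend it to $K$.
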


Notice that the Galois group of the base field plays an essential role in the statement itself. Indeed, over an algebraically closed field (e.g., $\mathbb{C}$), the set of homomorphisms between fundamental groups is much coarser data than the set of morphisms between curves. In particular, the existence of an isomorphism between the fundamental groups of hyperbolic curves is equivalent to the equality of their genus.

Inspired by the work of Hitchin and Simpson on non-abelian Hodge theory, we formulate a complex-analytic analogue of \Cref{main} for complex hyperbolic Riemann surfaces in which the Galois group $G_K$ is replaced by the ``$\mathbb{C}^*$-action'' arising from non-abelian Hodge theory.

Non-abelian Hodge correspondence states that for smooth proper $X$ there is an equivalence of categories\upshape:
\[
\begin{aligned}
\Bigl\{
  \begin{aligned}
  &\textit{finite-dimensional $\mathbb{C}-$representations}\\
  &\textit{of the fundamental group }\pi_1(X)
  \end{aligned}
\Bigr\}
\;\simeq\;
\Bigl\{
  \begin{aligned}
  &\textit{semistable Higgs bundles on }X\\
  &\textit{with vanishing Chern classes}
  \end{aligned}
\Bigr\}
\end{aligned}
\]

It can be viewed as a non-abelian version of Hodge decomposition. On the Higgs side there is a natural $\mathbb{C}^{*}$-action given by rescaling the Higgs field, $\theta\mapsto t\theta$. This suggests that on the local-system side there should be a hidden ``$\mathbb{C}^{*}$-action'' on $\pi_1(X)$, and that $\mathbb{C}^{*}$ should be viewed as the ``motivic Galois group'' of the complex numbers. With this in mind, we propose and prove the following Hodge-theoretic version of \Cref{main}.

\begin{thm}[\Cref{anabelian for curve},\Cref{ball quotient anabelian}]\label{hodge anabelian}
    Let $X$ be a compact Kähler maniforld and $Y/\mathbb{C}$ a smooth projective hyperbolic curve. Then the natural map
    \[
      \mathrm{Hom}^{\mathrm{dom}}(X,Y)\simeq \mathrm{Hom}_{\mathbb{C}^{*}}^{\mathrm{open}}\bigl(\pi_1(X), \pi_1(Y)\bigr)
    \]
    is bijective. We explain the definition of $\mathrm{Hom}_{\mathbb{C}^*}^{\mathrm{open}}(-,-)$ in \Cref{open and equiv}.
\end{thm}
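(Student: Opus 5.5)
We have to guess what $\mathrm{Hom}^{\mathrm{open}}_{\mathbb{C}^*}$ means. Presumably it is the set of homomorphisms $\phi:\pi_1(X)\to\pi_1(Y)$ with finite-index image such that the induced map on pro-algebraic (or pro-reductive) completions $\pi_1(X)^{\mathrm{alg}}\to\pi_1(Y)^{\mathrm{alg}}$ is equivariant for Simpson's $\mathbb{C}^*$-actions, perhaps up to inner automorphisms. Equivalently, pullback $\phi^*$ of representations commutes with the $\mathbb{C}^*$-action $\theta\mapsto t\theta$ on the Higgs side. The plan is to show that such equivariance forces $\phi$ to be induced by a holomorphic map. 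This would go through harmonic maps to the hyperbolic plane, or through variations of Hodge structure, which are exactly the $\mathbb{C}^*$-fixed points.

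Injectivity should be the easy direction. A dominant holomorphic map $f:X\to Y$ induces $f_*$ with finite-index image, since the generic fibre has finitely many components. The map $f_*$ is $\mathbb{C}^*$-equivariant because Higgs pullback $f^*(E,\theta)=(f^*E,f^*\theta)$ commutes with rescaling. If $f_*=g_*$ up to conjugation, then $f$ and $g$ are homotopic. Since $Y$ has negative curvature and $f$, $g$ are holomorphic, hence harmonic, Hartman/Eells–Sampson uniqueness makes them equal: the homotopy class contains a unique harmonic map, because its image is not a point or a geodesic.

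For surjectivity, take $\phi$ equivariant and open. Choose the uniformizing representation $\rho_Y:\pi_1(Y)\to PSL_2(\mathbb{R})$, lifted to $SL_2$ after a spin choice. This corresponds to the Higgs bundle $K^{1/2}\oplus K^{-1/2}$ with $\theta$ the identity map $K^{1/2}\to K^{-1/2}\otimes K$. It is a $\mathbb{C}^*$-fixed point, i.e.\ a complex VHS of weight $(1,1)$. By equivariance, $\rho=\rho_Y\circ\phi$ is then a $\mathbb{C}^*$-fixed point on $X$. Since $\phi$ has finite-index image, $\rho$ is Zariski dense, hence reductive, and corresponds to a stable Higgs bundle. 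So $\rho$ underlies a polarized VHS on $X$ whose period domain is the upper half plane $\mathbb{H}$. The period map is then a $\rho$-equivariant holomorphic (or antiholomorphic, according to the Hodge filtration convention) map $\tilde X\to\mathbb{H}$. It descends to $f:X\to \mathbb{H}/\rho_Y(\pi_1 Y)=Y$ with $f_*=\phi$ up to conjugation, since $\rho_Y$ is faithful and discrete. Dominance follows because a constant $f$ would force $\phi$ trivial, contradicting openness.

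The main obstacle is this step. First, one must make precise that equivariance of $\phi$ on pro-algebraic completions transports fixed points to fixed points, not merely up to some twist. Second, one must rule out the conjugate case, in which the period map is antiholomorphic, i.e.\ it uses $\bar{\theta}$. I would handle the conjugate case by noting that the $\mathbb{C}^*$-action also pins down the Hodge type: equivariance with respect to $t$ (rather than $\bar t$) fixes which graded piece $\theta$ lowers. This forces holomorphicity of the period map with the orientation compatible with $Y$'s complex structure. If this fails, one may need to build the orientation into the definition in \Cref{open and equiv}.
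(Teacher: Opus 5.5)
Your proof is correct, and the definition you guessed matches \Cref{open and equiv}. The surjectivity half is the paper's argument. You pull back the uniformizing PVHS along $\phi$ (\Cref{pull back of pvhs}), take the period map $\tilde X\to\mathbb{H}$, and descend it through $\mathbb{H}/\rho_Y(\pi_1(Y))\simeq Y$. For general K\"ahler $X$ the paper does this as the case $n=1$ of \Cref{ball quotient anabelian}. There it first replaces $Y$ by the finite \'etale cover attached to $\phi(\pi_1(X))$, where you instead invoke Zariski density of finite-index subgroups of the lattice.

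Injectivity is where you genuinely differ. You use rigidity of harmonic maps:
\begin{itemize}
\item asphericity of $Y$ turns conjugate $f_*,g_*$ into a free homotopy;
\item holomorphic maps between K\"ahler manifolds are harmonic;
\item Hartman's uniqueness theorem for negatively curved targets then forces $f=g$, since the common image is all of $Y$ and not a geodesic.
\end{itemize}
The paper argues differently in two places. For a curve $X$ it uses $F^*=G^*$ on $H^1$, Torelli, and an Abel--Jacobi translation argument, which as written needs $X$ to be a curve. For general $X$, in \Cref{ball quotient anabelian}, it shows that $F$ equals the descended period map of $F^*\mathbb{L}_Y$. This uses functoriality of Hodge flags and uniqueness of the PVHS structure on an irreducible local system (\Cref{c star equiv map}).

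Your route is shorter and uses no $\mathbb{C}^*$-input at all. It also works for any compact K\"ahler source and any compact negatively curved target. The paper's route stays inside non-abelian Hodge theory and exhibits the inverse bijection explicitly as ``take the period map''.

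The holomorphic-versus-antiholomorphic issue you flag is real; the paper's curve proof identifies $D$ with $\mathbb{H}$ without comment. It closes without modifying the definition, and there are two ways to see this.

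\emph{Via the Hodge cocharacter.} In the paper's formulation, \Cref{c star equiv map} uses the $\mathbb{C}^*$-action on $G$ defined by the Hodge cocharacter $h$ of $\mathbb{L}_Y$. Since $\rho_Y^{\mathrm{alg}}$ and $\phi^{\mathrm{alg}}$ are both equivariant, so is their composite. Hence $\rho$ is a $(G,h)$-PVHS with period domain $D_{G,h}$, which is $\mathbb{H}$ with the same complex structure used to uniformize $Y$.

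\emph{Directly.} Suppose the period map were antiholomorphic. Then you would get an antiholomorphic dominant $F\colon X\to Y$ with $\pi_1(F)=\phi$. Take the rank-one local system $\chi$ on $Y$ with Higgs bundle $(\cO_Y,\alpha)$, where $0\neq\alpha\in H^0(Y,\Omega^1_Y)$. The Higgs field of $F^*(t\cdot\chi)$ is $\bar t\,F^*\bar\alpha$, whereas that of $t\cdot F^*\chi$ is $t\,F^*\bar\alpha$. Here $F^*\bar\alpha$ is a nonzero holomorphic $1$-form on $X$. So $\phi$ would fail to be equivariant for every $t\notin\mathbb{R}$, which is a contradiction.
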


The proof is relatively simple, but relies crucially on non-abelian Hodge theory. Using a similar method, we also prove a higher-dimensional generalization.
\begin{thm}\label{higher}
     Let $X$ be a compact Kähler manifold and $Y$ be (compact) complex hyperbolic manifold of ball quotient type. Then the natural morphism 
    \[
      \pi\colon \mathrm{Hom}(X,Y)\rightarrow \mathrm{Hom}_{\mathbb{C}^*}\bigl(\pi_1(X),\pi_1(Y)\bigr)
    \]
induces a isomorphism $$\pi^{-1}(\mathrm{Hom}_{\mathbb{C}^*}^{\mathrm{open}}(\pi_1(X),\pi_1(Y)))\simeq \mathrm{Hom}_{\mathbb{C}^*}\bigl(\pi_1(X),\pi_1(Y)\bigr) $$
\end{thm}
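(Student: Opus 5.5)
We will prove \Cref{higher} by reducing both injectivity and surjectivity to harmonic map theory, with non-abelian Hodge theory turning the $\mathbb{C}^*$-equivariance hypothesis into holomorphicity of the harmonic map. Throughout we use the definition of $\mathrm{Hom}_{\mathbb{C}^*}$ from \Cref{open and equiv}, which says that the induced map on pro-algebraic completions commutes with the $\mathbb{C}^*$-actions coming from Simpson's correspondence.

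Write $Y=\Gamma\backslash\mathbb{B}^n$ with $\Gamma\subset PU(n,1)$ a cocompact torsion-free lattice. Since $Y$ is a $K(\pi,1)$, any homomorphism $\rho\colon\pi_1(X)\to\Gamma$ is realized by a continuous map $X\to Y$, unique up to homotopy.

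\textbf{Step 1 (equivariant harmonic map).} Let $\rho\colon \pi_1(X)\to\pi_1(Y)=\Gamma$ be $\mathbb{C}^*$-equivariant, and first assume it is open, i.e.\ of finite-index image. Then the composite $\pi_1(X)\to \Gamma\subset PU(n,1)$ is reductive, since its Zariski closure contains that of a lattice, which is all of $PU(n,1)$ by Borel density. Corlette's theorem then gives a $\rho$-equivariant harmonic map $\tilde f\colon\tilde X\to\mathbb{B}^n$, which descends to a harmonic map $f\colon X\to Y$ in the homotopy class of $\rho$. By strict negative curvature and Hartman's uniqueness, $f$ is unique unless its image lies in a geodesic; finite-index image rules this out.

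\textbf{Step 2 ($\mathbb{C}^*$-fixedness gives holomorphicity).} The $\mathbb{C}^*$-equivariance means the corresponding point of the character variety (for the standard representation of $PU(n,1)$, or its lift to $U(n,1)$) is fixed by $\mathbb{C}^*$. By Simpson, the associated Higgs bundle is then a complex variation of Hodge structure, and the Hodge decomposition is preserved by the flat structure in the appropriate sense. For $U(n,1)$ such a VHS has length two, with the Higgs field of shape $E^{1,0}\to E^{0,1}$ of ranks $(n,1)$ or $(1,n)$. Its period map is exactly the harmonic map $\tilde f\colon\tilde X\to\mathbb{B}^n\subset$ period domain, and the Griffiths transversality of a VHS into $\mathbb{B}^n=PU(n,1)/U(n)$ means $\tilde f$ is holomorphic or antiholomorphic. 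The choice between the two corresponds to the two $\mathbb{C}^*$-orientations, and the equivariance convention fixes holomorphic. Hence $f$ is holomorphic.

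\textbf{Step 3 (bijectivity).} Holomorphicity gives surjectivity onto $\mathrm{Hom}^{\mathrm{open}}_{\mathbb{C}^*}$; the $\mathbb{C}^*$-compatibility of $\pi(f)$ for holomorphic $f$ comes from functoriality of Simpson's correspondence under pullback. For injectivity, suppose $g_1,g_2$ are holomorphic with the same induced homomorphism up to conjugacy, i.e.\ homotopic. Holomorphic maps between Kähler manifolds are harmonic, so uniqueness of harmonic maps in a homotopy class with nonnegative-curvature-free non-geodesic image (Hartman, plus strict negativity) forces $g_1=g_2$. The same argument restricted to open homomorphisms yields the stated isomorphism on $\pi^{-1}(\mathrm{Hom}^{\mathrm{open}}_{\mathbb{C}^*})$.

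\textbf{Main obstacle.} The main obstacle is Step 2: deducing from the abstract $\mathbb{C}^*$-equivariance on pro-algebraic completions that the specific representation into $\Gamma\subset PU(n,1)$ underlies a VHS with the correct Hodge type, and excluding the antiholomorphic case. I would first try to show that the equivariance forces $\rho$ to be fixed in the moduli of Higgs bundles for the standard representation. I would then use that the only real form with a fixed point compatible with $\mathbb{B}^n$ is the Hermitian one, with transversality forcing $\partial\tilde f$ to be the nonzero Higgs component.
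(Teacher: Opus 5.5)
Your Step~2 is where all the content lies, and as written it fails for two reasons.

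\emph{Missing input.} $\mathbb{C}^*$-equivariance of $f$ by itself says nothing about $\rho_Y\circ f$. You need that $\rho_Y$ is already a $\mathbb{C}^*$-fixed point, i.e.\ that $\mathbb{L}_Y$ underlies the uniformizing $\mathrm{PU}(n,1)$-VHS on $Y$ (Simpson's Prop.~9.1/9.8, \Cref{simpson prop9.1}). Only then do you get $t\cdot f^*\mathbb{L}_Y\cong f^*(t\cdot\mathbb{L}_Y)\cong f^*\mathbb{L}_Y$.

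\emph{False claim.} More seriously, you assert that a $\mathbb{C}^*$-fixed $\mathrm{U}(n,1)$-local system is a VHS of length two with Hodge numbers $(n,1)$ or $(1,n)$. This is false. The polarization alternates in sign on consecutive Hodge pieces and the negative part has rank one, so the possible Hodge numbers are $(n,1)$, $(1,n)$, and $(a,1,b)$ with $a+b=n$, $a,b>0$. In the last case the harmonic map to $\mathbb{B}^n$ (sending a point to the negative line) has nonzero $\partial$- and $\bar\partial$-components, so it is neither holomorphic nor antiholomorphic. Concretely, on a curve the adjoint of the uniformizing VHS, with Higgs bundle $K_X\oplus\cO_X\oplus K_X^{-1}$, has monodromy in $\mathrm{SO}(2,1)\subset\mathrm{PU}(2,1)$ and is $\mathbb{C}^*$-fixed. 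Yet its equivariant harmonic map $\mathbb{H}\to\mathbb{B}^2$ is the totally geodesic, totally real embedding of $\mathbb{H}^2_{\mathbb{R}}$. Zariski density does not rescue the claim: generic stable graded Higgs bundles $L_1\oplus L_0\oplus L_{-1}$ on a curve give type $(1,1,1)$ $\mathrm{U}(2,1)$-VHS with Zariski-dense monodromy. So ``fixed point $\Rightarrow$ harmonic map is (anti)holomorphic'' is not a valid mechanism, and ``the equivariance convention fixes holomorphic'' is not an argument.

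\emph{The missing idea.} You must transport the \emph{specific} Hodge type of $Y$, not just fixedness. By \Cref{c star equiv map}, $\rho_Y^{\mathrm{alg}}$ is $\mathbb{C}^*$-equivariant for the action on $\mathrm{PU}(n,1)$ given by the Hodge cocharacter $h_Y$ of the uniformizing VHS. Composing with the equivariant $f^{\mathrm{alg}}$ shows that $\rho_Y\circ f$ is a $(\mathrm{PU}(n,1),h_Y)$-PVHS. Its period domain $D_{G,h_Y}$ is $\mathbb{B}^n$ with exactly the complex structure that makes $\tilde Y\to\mathbb{B}^n$ biholomorphic. The period map is then holomorphic by construction and descends to $X\to\mathbb{B}^n/\rho_Y f(\pi_1(X))\to Y$. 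This is the paper's surjectivity argument, and it makes Corlette's theorem and the holomorphic/antiholomorphic dichotomy unnecessary.

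\emph{Injectivity.} Your injectivity argument is correct and genuinely different from the paper's. Holomorphic maps between Kähler manifolds are harmonic, and Hartman's uniqueness applies because a finite-index subgroup of $\pi_1(Y)$ is not cyclic. The paper instead identifies $F$ with the map built from $\pi_1(F)$ by comparing the period map of $F^*\mathbb{L}_Y$ with $P_{\mathbb{L}_Y}$, using uniqueness of the PVHS structure on an irreducible local system. Your route is cleaner there.
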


Recent developments in anabelian geometry suggest that anabelian phenomenon also occurs for  certain non-$K(\pi,1)$ spaces, if one considers the entire étale homotopy type instead of étale fundamental group. In particular, the following theorem was proven in \cite{Schmidt_2016}.

\begin{thm}[\cite{Schmidt_2016}*{Theorem 1.2}]
    Let $K/\mathbb{Q}$ be a finitely generated extension of $\mathbb{Q}$. Let $X$ and $Y$ be smooth, geometrically connected varieties over $K$ which can be embedded as locally closed subschemes into a product of hyperbolic curves over $K$. Then
    \[
      \mathrm{Isom}_{K}(X,Y)\longrightarrow \mathrm{Isom}_{(\Spec K)_{\et}^{\sim}}\bigl((X_{\et})^{\sim},(Y_{\et})^{\sim}\bigr)
    \]
    admits a retraction. Here, $(-)_{\et}^{\sim}$ denotes the étale homotopy type of schemes.
\end{thm}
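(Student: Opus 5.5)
This theorem is quoted from \cite{Schmidt_2016} and is used only as motivation, so the plan below is the strategy I would follow to prove it. It is not an argument taken from this note.

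\textbf{Overall approach.} The idea is to reduce to the anabelian statement for hyperbolic curves, Theorem \ref{main}, in its finitely generated form due to Mochizuki. The tool is the embedding into a product of hyperbolic curves, together with the fact that étale homotopy types of such varieties carry enough information to recover $K$-points and maps to curves.

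\textbf{Step 1: a candidate inverse.} Fix embeddings $X \hookrightarrow \prod_i C_i$ and $Y \hookrightarrow \prod_j D_j$ into products of hyperbolic curves over $K$. Let $\varphi$ be an isomorphism of étale homotopy types over $(\Spec K)_{\et}^{\sim}$.
\begin{itemize}
\item For each $j$, compose $\varphi$ with $(Y_{\et})^{\sim} \to (D_j)_{\et}^{\sim}$. This gives a morphism of homotopy types $(X_{\et})^{\sim} \to (D_j)_{\et}^{\sim}$ over $K$.
\item Since $D_j$ is a $K(\pi,1)$, this morphism is the same as an outer homomorphism $\pi_1(X) \to \pi_1(D_j)$ over $G_K$.
\item Mochizuki's Hom theorem for a target hyperbolic curve (over sub-$p$-adic fields, which covers finitely generated $K/\mathbb{Q}$) identifies such homomorphisms with morphisms $X \to D_j$. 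This needs the source to be regular and the homomorphism to be compatible with $G_K$.
\end{itemize}
Assembling over $j$ gives a morphism $f_\varphi \colon X \to \prod_j D_j$.

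\textbf{Step 2: $f_\varphi$ lands in $Y$.} This step uses the full homotopy type rather than only $\pi_1$.
\begin{itemize}
\item Section conjecture style arguments say that $K$-points, and more generally $L$-points for finite extensions $L/K$, correspond to homotopy sections of the structure maps.
\item For a closed point $x \in X$, the homotopy section defined by $x$ is transported by $\varphi$ to a homotopy section of $(Y_{\et})^{\sim}$. Its image in $\prod_j D_j$ is the image of $f_\varphi(x)$.
\item Injectivity of the section map (the easy half of the section conjecture, via Kummer theory on Jacobians) then forces $f_\varphi(x) \in Y$.
\item Closed points are dense, so $f_\varphi$ factors through the closure $\bar Y$. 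Density of points, together with the open-immersion data encoded in the étale homotopy type of $Y$ (for example via étale cohomology with supports, i.e. purity), then gives a factorization through $Y$ itself.
\end{itemize}

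\textbf{Step 3: retraction property.} If $\varphi$ comes from an isomorphism $g \colon X \to Y$, then each composite in Step 1 is induced by $\mathrm{pr}_j \circ g$. Faithfulness in Mochizuki's theorem then gives $f_\varphi = g$. So $\varphi \mapsto f_\varphi$ is a retraction on the image of $\mathrm{Isom}_K(X,Y)$.

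It is not clear that $f_\varphi$ is an isomorphism for an arbitrary $\varphi$. To handle this, define the retraction to send $\varphi$ to $f_\varphi$ when $f_\varphi$ is an isomorphism (check this using $f_{\varphi^{-1}}$ and functoriality), and to a fixed element otherwise. This is exactly why the statement claims only a retraction.

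\textbf{Main obstacle.} I expect the hardest step to be Step 2, the factorization through the locally closed $Y$ rather than its closure. If the purity approach fails, I would instead work with a stratification by products of curves, using the same injectivity argument on each stratum.
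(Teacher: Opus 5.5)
The note gives no proof of this statement. It quotes it from \cite{Schmidt_2016} purely as motivation, so your sketch has to stand on its own. There is a genuine gap in Step 2, which is the heart of the matter, and a second one in Step 3.

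Step 1 is reasonable, with one small fix: Mochizuki's theorem needs an \emph{open} homomorphism, so you must first discard the factors $D_j$ onto which $Y$ projects to a point. The injectivity half of Step 3 is also fine.

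\textbf{Step 2.} Put $Z=\prod_j D_j$. You have a homotopy section $t=\varphi_*(s_x)$ of $(Y_L)_{\et}$ whose image in $Z_{\et}$ is the section $s_z$ of the point $z=f_\varphi(x)\in Z(L)$. You conclude $z\in Y$ from injectivity of the section map, but injectivity only applies if you already know $t=s_y$ for some $y\in Y(L)$. That is the open surjectivity half of the section conjecture for $Y$. What you actually need is a lifting criterion: if $s_z$ lifts to a homotopy section of $(Y_L)_{\et}$, then $z\in Y$.

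At the level of fundamental groups this criterion is false. Take a smooth ample surface $Y\subset D_1\times D_2\times D_3$. Grothendieck--Lefschetz gives $\pi_1(Y)\xrightarrow{\sim}\pi_1(Z)$ compatibly with $G_K$. So the section of \emph{every} $z\in Z(L)$ lifts to $\pi_1(Y_L)$, including points off $Y$. Any proof must therefore show that, for $z\notin Y$, this $\pi_1$-lift does not extend to a homotopy section of $(Y_L)_{\et}$. That means using the higher homotopy of $Y_{\et}$ in an essential way. You say Step 2 ``uses the full homotopy type'', but the argument as written uses only $\pi_1$-level information plus the existence of $t$. The appeal to purity for passing from $\bar Y$ to $Y$ is likewise only a gesture.

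\textbf{Step 3.} No map from a nonempty set to the empty set exists. So ``admits a retraction'' means exactly this: the map is injective, and $X_{\et}\simeq Y_{\et}$ over $K$ forces $X\cong Y$. Hence ``a fixed element otherwise'' is not available for free. You must prove that $f_\varphi$ is an isomorphism, and that is the main content of the theorem.

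Your proposed check via $f_{\varphi^{-1}}$ does not go through. Step 1 gives $\pi_1(f_\varphi)=\varphi_*$ only after composing with $\pi_1(Y)\to\pi_1(Z)$, and that map need not be injective. So you cannot conclude that $\mathrm{pr}_i\circ f_{\varphi^{-1}}\circ f_\varphi$ and $\mathrm{pr}_i|_X$ induce the same map $\pi_1(X)\to\pi_1(C_i)$. Without that, Mochizuki's injectivity cannot be used to get $f_{\varphi^{-1}}\circ f_\varphi=\mathrm{id}_X$. For that you would need $\pi_1(f_\varphi)=\varphi_*$ as outer homomorphisms into $\pi_1(Y)$ itself, which is again a section-conjecture-type input.
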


In the complex analytic context we choose the schematic homotopy type constructed in \cite{Katzarkov_2008} to be the analogue of étale homotopy type, we will formulate an analogous Hodge theoretical version of it. 
\begin{conj}
Let $X$ and $Y$ be smooth, geometrically connected varieties over $\mathbb{C}$ which can be embedded as closed subschemes into a product of hyperbolic curves over $\mathbb{C}$. Then
\[
  \mathrm{Isom}_{\mathbb{C}}(X,Y)\longrightarrow \mathrm{Isom}_{\mathbb{C}^*}\bigl(X(\mathbb{C})^{\sim},Y(\mathbb{C})^{\sim}\bigr)
\]
admits a retraction. We refer to \Cref{hodge homotopy isom} the definition of right hand side.
\end{conj}

The origin of this note lies in an attempt to understand \Cref{main} through the lens of \(p\)-adic non-abelian Hodge theory (often referred to as the \(p\)-adic Simpson correspondence; see, for example, \cite{heuer2025padicsimpsoncorrespondencesmooth}). Mochizuki’s proof relies crucially on \(p\)-adic Hodge theory, which naturally suggests that a more refined framework—namely \(p\)-adic non-abelian Hodge theory—might offer a new perspective. It is worth noting that Mochizuki’s argument is highly technical and difficult to adapt directly to our Hodge-theoretic setting, whereas our approach is comparatively simple and direct.

Motivated by the proof of \Cref{hodge anabelian}, we are led to believe that a deeper study of certain uniformizing local systems\footnote{Namely, the \(\mathbb{C}_p\)-local system whose associated Higgs bundle under the \(p\)-adic Simpson correspondence is the uniformizing Higgs bundle defined in \cref{uniformizing}. The existence of such local systems has recently been established conditionally in unpublished work of Xu--Zhu.} on hyperbolic curves over \(\mathbb{C}_p\) could yield a more conceptual proof of \Cref{main}. As these ideas are still preliminary and logically independent of the present work, we do not pursue them further here, and instead refer the interested reader to future work for additional details.
\begin{pp-tweak}[\!\!Acknowledgements]
The author would like to thank Kęstutis Česnavičius for his support. The author is also grateful to Daniel Litt, Vadim Vologodsky and Sasha Petrov for helpful discussions.
\end{pp-tweak}

\section{Non-abelian Hodge theory and the $\mathbb{C}^*$-action}
\subsection{Recollection of non-abelian Hodge theory}
For a compact Kähler manifold $(X,\omega)$, the non-abelian Hodge correspondence established by Simpson relates topological objects---local systems---to algebraic objects---Higgs bundles. Recall that a Higgs bundle $(E,\theta)$ on $X$ is a vector bundle $E$ on $X$ together with an $\cO_X$-linear morphism
\[
  \theta\colon E\longrightarrow E\otimes \Omega_X^{1}, \theta\wedge \theta=0.
\]
We call a Higgs bundle $(E,\theta)$ \textit{semistable} if every Higgs-sub-bundle $F\subset E$, we have $\mathrm{deg}(F)/\mathrm{rk(E)}\leq \mathrm{deg}(F)/\mathrm{rk}(F)$ where $\mathrm{deg}(F)\coloneq \int_{X}c_1(F)\wedge \omega^{n-1}$. We call a Higgs bundle $(E,\theta)$ has vanishing Chern class if $\int_{X}c_1(E)\wedge \omega^{n-1}=\int_Xc_2(E)\wedge \omega^{n-2}=0$.
The non-abelian Hodge correspondence is as follows.

\begin{thm}[\cite{PMIHES_1992__75__5_0}]\label{simpson nonab}
    There is an equivalence of categories:
    \[
\begin{aligned}\label{nonabhodge}
\Bigl\{
  \begin{aligned}
  &\textit{Finite-dimensional $\mathbb{C}-$representations}\\
  &\textit{of the fundamental group }\pi_1(X)
  \end{aligned}
\Bigr\}
\;\simeq\;
\Bigl\{
  \begin{aligned}
  &\textit{Semistable Higgs bundles on }X\\
  &\textit{with vanishing Chern classes}
  \end{aligned}
\Bigr\}
\end{aligned}
\]
  Moreover, for each reductive group $G$, this induces a homeomorphism between the moduli space of representations of the fundamental group and the moduli space of \up{semistable with vanishing Chern classes} Higgs bundles:
    \begin{equation}\label{homeo}
        \mathrm{Rep}_{G(\mathbb{C})}\bigl(\pi_1(X)\bigr)(\mathbb{C})\simeq \mathrm{Higgs}_{G}^{\mathrm{ss},\,\mathrm{Ch}=0}(X)(\mathbb{C}),
    \end{equation}
    which is functorial with respect to homomorphisms of reductive groups $G$.
\end{thm}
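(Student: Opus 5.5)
This is Simpson's theorem, so the plan is to reconstruct the main steps of Simpson's argument from \cite{PMIHES_1992__75__5_0} and not to reprove everything from scratch. The bridge between the two sides is harmonic metrics. Write $D = \partial + \bar\partial + \theta + \theta^{\dagger}_h$ for the connection built from a Higgs bundle $(E,\theta)$ with hermitian metric $h$. The metric $h$ is \emph{harmonic} if $D$ is flat. I will establish two existence results:
\begin{enumerate}
\item \emph{Corlette--Donaldson.} A representation $\rho$ of $\pi_1(X)$ admits a $\rho$-equivariant harmonic map $\widetilde X\to G/K$ if and only if $\rho$ is semisimple. For a semisimple flat bundle $(V,\nabla)$, such a harmonic metric splits $\nabla=\partial+\bar\partial+\theta+\bar\theta$. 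The Kähler identities, together with the Siu--Sampson Bochner formula, then force $\bar\partial^2=0$, $\bar\partial\theta=0$ and $\theta\wedge\theta=0$. This produces a polystable Higgs bundle with vanishing Chern classes.
\item \emph{Hitchin--Simpson.} Conversely, a polystable Higgs bundle with $c_1\cdot[\omega]^{n-1}=c_2\cdot[\omega]^{n-2}=0$ admits a Hermitian--Yang--Mills metric. I will construct it by Donaldson's heat flow, with stability giving the $C^0$ estimate (Uhlenbeck--Yau style). The vanishing of the Chern numbers, combined with the Chern--Weil/Lübke inequality, then forces the full curvature of $D$ to vanish, so the bundle is flat.
\end{enumerate}
These two constructions are mutually inverse on objects, giving an equivalence
\[
\{\text{semisimple local systems}\}\;\simeq\;\{\text{polystable Higgs bundles with vanishing Chern classes}\}.
\]
For morphisms, the Hom spaces on both sides coincide with the spaces of $D$-flat sections of the Hom bundle, which carries the induced harmonic metric. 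This gives full faithfulness.

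To extend from the semisimple/polystable case to all representations and all semistable Higgs bundles, I will use filtrations. Every representation has a Jordan--Hölder filtration with semisimple graded pieces, and every semistable Higgs bundle with vanishing Chern classes has one with polystable graded pieces. Extensions are then matched by comparing the relevant $\mathrm{Ext}^1$ groups. On the de Rham side this is $H^1$ of the complex $(A^\bullet(\mathrm{End}), D)$; on the Higgs side it is hypercohomology of $(\mathrm{End}\otimes\Omega^\bullet,\theta)$. The two agree by the formality/principle-of-two-types lemma for $D=D'+D''$, where $D'=\partial+\bar\theta$ and $D''=\bar\partial+\theta$ satisfy Kähler identities, so the $D'D''$-lemma holds. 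Formality also identifies the higher extension data, which yields an equivalence of the full (non-semisimple) categories. This is exactly Simpson's Lemma 3.5 and Corollary 3.10 route.

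For the moduli statement \eqref{homeo}, I will apply the same construction in families. Uhlenbeck compactness and uniform estimates for harmonic metrics give continuity in both directions, and passing to S-equivalence classes on the Higgs side and closed orbits on the representation side gives a bijection between the moduli spaces. Functoriality in the reductive group $G$ follows because pushing a harmonic metric forward along a homomorphism $G\to G'$ of reductive groups (with a compatible choice of maximal compacts) is again harmonic.

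The main obstacle is step (2): the a priori $C^0$ estimate for the Hermitian--Yang--Mills flow in higher dimension, where stability must be converted into a bound via Uhlenbeck--Yau's construction of destabilizing weakly holomorphic subsheaves. I would follow Simpson's treatment of this step verbatim, with the Higgs field added to the curvature.
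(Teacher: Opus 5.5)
Your outline is the standard proof of Simpson's theorem, and that theorem is all the paper relies on: it gives no argument beyond the citation. Step (1), the full faithfulness for semisimple objects, and the functoriality in $G$ are fine. Step (2), however, fails as you state it.

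With only $c_1\cdot[\omega]^{n-1}=c_2\cdot[\omega]^{n-2}=0$, L\"ubke's Chern--Weil identity controls only the trace-free part $F_D^{\perp}$ of the curvature, through $\bigl(2c_2-\tfrac{r-1}{r}c_1^2\bigr)\cdot[\omega]^{n-2}$. The trace part $\mathrm{tr}\,F_D$ is a closed primitive $(1,1)$-form representing $c_1(E)$ up to a constant. By the Hodge--Riemann relations it vanishes if and only if $c_1^2\cdot[\omega]^{n-2}=0$, and your hypotheses do not give this.

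Concretely, take $X=\mathbb{P}^1\times\mathbb{P}^1$ with K\"ahler class of bidegree $(1,1)$. The line bundle $\mathcal{O}(1,-1)$ with $\theta=0$ is stable, with $c_1\cdot[\omega]=0$ and $c_2=0$. But $c_1^2=-2$, so it is not flat.

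The correct hypothesis is Simpson's normalization $\mathrm{ch}_1\cdot[\omega]^{n-1}=\mathrm{ch}_2\cdot[\omega]^{n-2}=0$. Then L\"ubke and the Hodge index theorem give $0\le\bigl(2c_2-\tfrac{r-1}{r}c_1^2\bigr)\cdot[\omega]^{n-2}=\tfrac{1}{r}\,c_1^2\cdot[\omega]^{n-2}\le 0$, which kills both the trace-free and the trace parts. You inherited this from the paper's own definition of ``vanishing Chern classes'', which has the same defect. Read literally with that definition, the statement is false.

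The passage to non-semisimple objects also hides substantial input. The Jordan--H\"older factors of a semistable Higgs bundle are a priori only torsion-free Higgs sheaves of slope $0$, so step (2) does not yet apply to them. To show each factor is locally free with vanishing Chern classes, you need:
\begin{itemize}
\item the Bogomolov--Gieseker inequality, with its equality case, for stable reflexive Higgs sheaves (via admissible metrics following Bando--Siu, or for projective $X$ by restricting to complete-intersection surfaces);
\item the inequality $\mathrm{ch}_2(F)\cdot[\omega]^{n-2}\le\mathrm{ch}_2(F^{\vee\vee})\cdot[\omega]^{n-2}$;
\item the Hodge index theorem.
\end{itemize}
Summing these over the factors against $\mathrm{ch}_2(E)\cdot[\omega]^{n-2}=0$ forces every inequality to be an equality.

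Moreover, matching $\mathrm{Ext}^1$ groups does not by itself define a functor on iterated extensions and their morphisms. What does is the identity $(D+\beta)^2=(D''+\beta)^2$ whenever $D'\beta=0$. Take a form $\beta$ valued in the filtration-decreasing part of $\mathrm{End}(\mathrm{Gr})$ with $D'\beta=0$ and $D''\beta+\beta\wedge\beta=0$. It simultaneously deforms the harmonic bundle $\mathrm{Gr}$ into a flat bundle and into a Higgs bundle. The $D'D''$-lemma makes $(\ker D',D'')$ quasi-isomorphic to both $(A^\bullet,D)$ and $(A^\bullet,D'')$, compatibly with the nilpotent filtration. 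The two Deligne groupoids then agree by a Goldman--Millson argument. Working with socle filtrations, which every morphism respects, makes the construction functorial.
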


We recall a few basic facts about the non-abelian Hodge correspondence.

\begin{prop}[\cite{PMIHES_1992__75__5_0}]\label{action}
    In the setting above:
    \benuma
    \item Let $\cL$ be a local system on $X$. If the Higgs bundle $(E,\theta)$ corresponding to $\cL$ under \eqref{nonabhodge} is of the following form:
    \begin{equation}\label{graded}
        E\simeq \bigoplus_{i\in \mathbb{Z}}E_{i}, \qquad \theta=\bigoplus_{i\in\mathbb{Z}}\theta_i \text{ with } \theta_i\colon E_{i}\to E_{i-1}\otimes\Omega_X^1,
    \end{equation}
    where $E_i$ are vector bundles and $\theta_i$ are $\cO_X$-linear, then $\cL$ underlies a polarized variation of Hodge structure \up{(PVHS)}. We call a Higgs bundle of the form \eqref{graded} a \emph{graded Higgs bundle}.
    \item For any $t\in \mathbb{C}^*$, the scaling $\theta\mapsto t\theta$ defines a continuous $\mathbb{C}^*$-action on $\mathrm{Higgs}_G^{\mathrm{ss}}(X)(\mathbb{C})$. A Higgs bundle $(E,\theta)$ is fixed by this $\mathbb{C}^*$-action if and only if $(E,\theta)$ is a graded Higgs bundle.
    \item \up{Implied by (1)+(2).} Under \eqref{homeo}, the $\mathbb{C}^*$-action transfers to a continuous $\mathbb{C}^*$-action on the moduli space of representations of $\pi_1(X)$. In particular, a local system $\cL$ corresponds to a fixed point of this action if and only if it underlies a $\mathrm{PVHS}$.
    \eenum
\end{prop}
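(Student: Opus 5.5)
We treat the three parts in turn. For (2) we argue directly on the Higgs side, (1) is Simpson's construction of the Hodge filtration from a harmonic metric, and (3) follows formally.

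For (2), continuity of $t\cdot(E,\theta)=(E,t\theta)$ on $\mathrm{Higgs}_G^{\mathrm{ss}}(X)(\mathbb{C})$ is clear from the construction of the moduli space: scaling the Higgs field is an algebraic action on the parameter scheme of the GIT quotient, and it preserves semistability and the vanishing of Chern classes. The main content is the fixed-point characterization. If $(E,\theta)$ is graded as in \eqref{graded}, define the automorphism $g_t=\bigoplus_i t^{i}\,\mathrm{id}_{E_i}$. Since $\theta_i$ maps $E_i$ to $E_{i-1}$, we get $g_t\,\theta\, g_t^{-1}=t^{-1}\theta$, so $(E,\theta)\simeq (E,t\theta)$ and the point is fixed (replace $t$ by $t^{-1}$ if needed).

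Conversely, suppose $(E,\theta)\simeq(E,t\theta)$ for all $t$. First reduce to the polystable case: a point of the moduli space is represented by a polystable object, so we may assume $(E,\theta)$ is polystable. Then choose $t$ of infinite order, not a root of unity, and an isomorphism $\phi\colon (E,\theta)\to(E,t\theta)$. The key is to take a Jordan decomposition of $\phi$ in the reductive group $\mathrm{Aut}(E)$, which is finite dimensional because $X$ is compact. Its generalized eigenspace decomposition $E=\bigoplus_\lambda E_\lambda$ is a decomposition into holomorphic subbundles, because the eigenvalues of the endomorphism are constant on connected $X$. The relation $\phi\theta=t\theta\phi$ shows that $\theta$ maps $E_\lambda$ into $E_{t^{-1}\lambda}\otimes\Omega^1_X$.

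Grouping the eigenvalues into orbits $\lambda_0t^{\mathbb{Z}}$ and indexing by the exponent gives a $\mathbb{Z}$-grading. Different orbits are $\theta$-invariant summands, each of which is graded, and a direct sum of graded Higgs bundles is graded. This is the step I expect to be the main obstacle. One must justify that $\phi$ can be chosen with constant eigenvalues, which holds since $\mathrm{End}(E)$ is finite dimensional. One also needs that the relevant fixed points (in the moduli space rather than up to actual isomorphism) are handled by polystability, and for general $G$ one can use a cocharacter argument instead. An alternative route is to take the limit $t\to 0$ via properness of the Hitchin map, but the eigenspace argument is more direct.

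For (1), the harmonic metric $h$ on a graded polystable Higgs bundle can be chosen with the $E_i$ mutually orthogonal. By uniqueness of harmonic metrics up to automorphisms, $h$ is invariant under $g_t$ for $|t|=1$. The flat connection is $D=\partial_h+\bar\partial+\theta+\theta^{*_h}$, where $\theta$ lowers the index by one and $\theta^*$ raises it. Setting $F^p=\bigoplus_{i\ge p}E_i$, we get a filtration satisfying Griffiths transversality, and the flat form of signature $(-1)^i h$ on $E_i$ polarizes it. This is Simpson's standard computation, so $\cL$ underlies a PVHS. Conversely, a PVHS gives the graded Higgs bundle $\bigoplus_p \mathrm{Gr}^p_F$.

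Finally, (3) follows by transporting the action through the homeomorphism \eqref{homeo}, which is compatible with (2). A local system is fixed exactly when its Higgs bundle is fixed, hence exactly when the Higgs bundle is graded, hence exactly when the local system underlies a PVHS by (1). Here we restrict to semisimple local systems, since those correspond to the polystable points of the moduli space.
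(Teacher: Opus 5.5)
Your argument is correct in substance and is essentially the standard one from the Simpson paper that the paper cites without proof. The generalized-eigenspace decomposition of an isomorphism $\phi\colon(E,\theta)\to(E,t\theta)$ with $t$ not a root of unity gives (2), and a harmonic metric making the $E_i$ orthogonal turns the grading into a polarized complex VHS for (1). Your restriction to polystable $(E,\theta)$ there is genuinely needed: for a nonzero holomorphic $1$-form $\omega$, the graded but non-polystable $(\cO_X\oplus\cO_X,\theta)$ with $\theta(a,b)=(b\,\omega,0)$ corresponds to a non-semisimple unipotent local system, which underlies no PVHS.

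There are two points to tighten. First, from $\phi\theta=t\theta\phi$ one gets $\theta(E_\lambda)\subset E_{t\lambda}\otimes\Omega^1_X$, not $E_{t^{-1}\lambda}$; also $\mathrm{Aut}(E)$ need not be reductive, though you never use this. Second, in the polystable case uniqueness only gives $g_t^*h=h(s_t\,\cdot,\cdot)$ with $s_t$ positive, $h$-self-adjoint and $D_h$-parallel. To obtain a $g_t$-invariant (hence $E_i$-orthogonal) harmonic metric, you should average $g_t^*h$ over $|t|=1$. The average stays harmonic because such $s$ form a convex cone.
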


An important example of graded Higgs bundle and corresponding local system underlying a $\mathrm{PVHS}$ is the following.

\begin{eg}\label{uniformizing}
    Consider the following projective Higgs bundle $\mathbb{P}\bigl((E_X,\theta)\bigr)$\upshape:
    \[
      E_X\coloneq \cO_X \oplus \Omega_{X}^1,\qquad 
      \theta\colon \cO_X \oplus \Omega_{X}^1\to \Omega_{X}^1\oplus (\Omega_{X}^{1})^{\otimes 2}
      \ \text{ with }\ \theta|_{\cO_X}=0,\ \theta|_{\Omega_{X}^1}=(\mathrm{id},0),
    \]
    where $\mathbb{P}(E,\theta)$ denotes the associated projective bundle of $(E_X,\theta)$. We call $(E_X,\theta)$ the \emph{uniformizing Higgs bundle} \up{(since this construction does not depend on the dimension of $X$, we use the same term for any smooth algebraic variety)}.
    
    If $X$ is a hyperbolic curve, then this Higgs bundle, under non-abelian Hodge correspondence, corresponds to the $\mathrm{PSL}_2(\mathbb{R})$-representation of $\pi_1(X)$ defined by
    \[
      \rho_X\colon \pi_1(X)\longrightarrow \mathrm{Isom}(\mathbb{H})\simeq \mathrm{PSL}_2(\mathbb{R}),
    \]
    where the first map arises from the uniformization $\tilde{X}\simeq \mathbb{H}$. Up to  possibly choosing a double cover (an element in $H^1(X,\mathbb{Z}/2\mathbb{Z})$), this representation can be lifted to $\tilde{\rho}_X\colon \pi_1(X)\rightarrow \mathrm{SL}_2(\mathbb{R})$. The corresponding Higgs bundle, up to choosing a square root of $\Omega_X^1$, is the following 
     \[
      \tilde{E}_X\coloneq \Omega_X^{-\frac{1}{2}} \oplus \Omega_{X}^{\frac{1}{2}},\qquad 
      \theta\colon \Omega_X^{-\frac{1}{2}} \oplus \Omega_{X}^{\frac{1}{2}}\to \Omega_{X}^{\frac{1}{2}}\oplus \Omega_{X}^{\frac{3}{2}}
      \ \text{ with }\ \theta|_{\Omega_X^{-\frac{1}{2}}}=0,\ \theta|_{\Omega_{X}^{\frac{1}{2}}}=(\mathrm{id},0).
    \]
    This is a graded Higgs bundle (with $\Omega_X^{-\frac{1}{2}}$ weight $0$ part and $\Omega_{X}^{\frac{1}{2}}$ weight one part), thus $\tilde{\rho}_X$ give rise to a local system $\mathbb{L}_X$ on $X$ underlies a $\mathrm{PVHS}$-structure.

In fact, in \cite{Simpsonyangmill} Simpson reproved the uniformization theorem by considering the period map associated to $\mathbb{L}_X$: $p\colon \tilde{X}\rightarrow \mathbb{H}$, where $\tilde{X}$ is the universal cover. One observes that, the tangential map of $p$ can be computed by $\theta$, which in our case, is identity. Thus, $p$ is an isomorphism, since both are simply connected.

In the same spirit, Simpson provided the following example
\begin{eg}[\cite{Simpsonyangmill}*{Proposition 9.1, Proposition 9.8}]\label{simpson prop9.1}
    Let $X$ be a compact Kähler manifold of dimension $n$, consider the projective uniformizing Higgs bundle $\mathbb{P}(E_X,\theta)$. If we have that 
    $$(2c_2(X)-\frac{n}{n+1}c_1(X)^2)[\omega]^{n-2}=0$$
    Where $c_i(X)$ are the Chern classes and $[\omega]$ is the class of Khäler form, then $\mathbb{P}(E_X,\theta)$ gives rise to a stable projective Higgs bundle with vanishing Chern classes. Moreover, it give rise to a $\mathrm{PU}(n,1)\text{-}\mathrm{PVHS}$ on $X$, whose period map induces a isomorphism  of universal cover $\tilde{X}\simeq \mathbb{D}^n$ with the complex hyperbolic ball (period domain for $\mathrm{PU}(n,1)$). Conversely, if $X$ has universal cover by a complex hyperbolic ball, then it admits a $\mathrm{PU}(n,1)$-$\mathrm{PVHS}$ constructed as above.
 \end{eg}

\end{eg}

\subsection{$\mathbb{C}^*$-action on fundamental groups} 
Motivated by \Cref{action}(2), one expects a hidden $\mathbb{C}^*$-action on $\pi_1(X)$. Indeed, Simpson showed that, after choosing a base point $x\in X(\mathbb{C})$, there is a canonical $\mathbb{C}^{*}$-action (with $\mathbb{C}^*$ viewed as a discrete group) on $\pi_{1}(X,x)_{\mathbb{C}}^{\mathrm{alg}}$, the $\mathbb{C}$-linear pro-algebraic completion of $\pi_1(X,x)$ \cite[Theorem~6]{PMIHES_1992__75__5_0}. We now briefly explain a slight variant version.

Recall that for a abstract group $G$, the \textit{$\mathbb{C}$-linear pro\text{-}algebraic completion} of $G$ is the pro-algebraic group $G^{\mathrm{alg}}$ over $\mathbb{C}$ together with a map $G\rightarrow G^{\mathrm{alg}}(\mathbb{C})$ such that any finite dimensional $\mathbb{C}$-representation of $G$ factors through $G^{\mathrm{alg}}.$ Equivalently it is the Tannakian group of the Tannakian category of finite dimensional $\mathbb{C}$-representations of $G$ together with the fiber functor forgetting $G$-representation structure.
Without taking the fiber functor, this defines a gerbe $BG^{\mathrm{alg}}$ (see \cite{milne2007quotientstannakiancategories}).

Now, similarly, consider the Tannakian category $\mathrm{Loc}_{\mathbb{C}}(X)$ of $\mathbb{C}$-local systems on $X$, a choice of base point $x\in X$ gives a fiber functor $\mathrm{Loc}_{\mathbb{C}}(X)\rightarrow\mathrm{Vect}_{\mathbb{C}}$ by sending a local system to its stalk at $x$, which produces the Tannakian group $\pi_1(X,x)^{\mathrm{alg}}$. Without choosing base point, the Tannakian category then correspond to a gerbe $B\pi_1(X)^{\mathrm{alg}}$. 

By \Cref{simpson nonab}, the Tannakian category $\mathrm{Loc}_{\mathbb{C}}(X)$ is equivalent to the Tannakian category $\mathrm{Higgs}^{\mathrm{ss},\mathrm{Ch=0}}(X)$ consist of semistable Higgs bundles and vanishing Chern classes. Where on the later category there is $\mathbb{C}^{*}$ action as in \Cref{action}, which induces a $\mathbb{C}^*$ action on the gerbe $B\pi_1(X)^{\mathrm{alg}}$, or equivalently, a homomorphism
\begin{equation}\label{outer action}
    \mathbb{C}^*\rightarrow \pi_0\,\mathrm{Aut}_{\mathrm{Stack}/\mathbb{C}}\bigl(B \pi_1(X)^{\mathrm{alg}}\bigr).
\end{equation}

Concretely this means a $\mathbb{C}^*$-action on $\pi_1(X)^{\mathrm{alg}}$ up to conjugation:
\begin{equation}
    \mathbb{C}^*\rightarrow \mathrm{Out}\bigl(\pi_1(X)^{\mathrm{alg}}\bigr).
\end{equation}

Let $Y$ be another smooth projective variety over $\mathbb{C}$. By functoriality of pro-algebraic completion, the set $\mathrm{Hom}_{\mathrm{gp}}\bigl(\pi_1(X),\pi_1(Y)\bigr)/{\sim}$ of homomorphisms up to conjugation maps naturally to $\mathrm{Hom}_{\mathrm{alg\text{-}gp}}\bigl(\pi_1(X)^{\mathrm{alg}},\pi_1(Y)^{\mathrm{alg}}\bigr)/{\sim}$. More formally, we have
\[
  \mathrm{alg}(X,Y)\colon \pi_{0}\mathrm{Hom}_{\Ani}\bigl(B\pi_1(X),B\pi_1(Y)\bigr)\longrightarrow \pi_0 \mathrm{Hom}_{\mathrm{Stack}/\mathbb{C}}\bigl(B \pi_1(X)^{\mathrm{alg}},B\pi_1(Y)^{\mathrm{alg}}\bigr).
\]
By \eqref{outer action}, there is a $\mathbb{C}^*$-action on the target.

\begin{definition}\label{open and equiv}
In the above setting:
\benuma
\item A homomorphism $f\in \mathrm{Hom}_{\mathrm{gp}}\bigl(\pi_1(X),\pi_1(Y)\bigr)/\sim$ is \emph{$\mathbb{C}^{*}$-equivariant} if its image $\mathrm{alg}(X,Y)(f)$ is fixed under the $\mathbb{C}^*$-action.
\item A homomorphism $f\in \mathrm{Hom}_{\mathrm{gp}}\bigl(\pi_1(X),\pi_1(Y)\bigr)/\sim$ is \emph{open} if the image $f\bigl(\pi_1(X)\bigr)$ contains a finite-index subgroup of $\pi_1(Y)$.
\eenum
We denote by $\mathrm{Hom}_{\mathbb{C}^*}^{\mathrm{open}}\bigl(\pi_1(X),\pi_1(Y)\bigr)$ the set of homomorphisms that are both open and $\mathbb{C}^*$-equivariant.
\end{definition}

We now explain the relation between our notion of $\mathrm{Hom}_{\mathbb{C}^*}^{\mathrm{open}}(-,-)$ and the classical notion of $\mathrm{Hom}_{G_K}^{\mathrm{open}}(-,-)$ in \Cref{main}.

First, in the complex-analytic setting we do not possess a fully developed notion of an ``arithmetic fundamental group'' (or more precisely "Weil fundamental group") in the sense that we could complete an exact sequence of the form
\[
  1 \longrightarrow \pi_1(X) \longrightarrow ? \longrightarrow \pi_1^{\mathrm{Weil}}(\mathbb{C}) \coloneq \mathbb{C}^* \longrightarrow 1.
\]
In contrast, the definition of $\mathrm{Hom}_{G_K}^{\mathrm{open}}(-,-)$ involves homomorphisms between arithmetic fundamental groups on both the source and the target. However, a surjective homomorphism (up to conjugation) between $\pi_1^{\mathrm{\acute{e}t}}(X)$ and $\pi_1^{\mathrm{\acute{e}t}}(Y)$ over $G_K$ is equivalent (assuming $\pi_1^{\et}(\bar{Y})$ is center free) to a homomorphism (up to conjugation) between the geometric fundamental groups $\pi_1^{\mathrm{\acute{e}t}}(\bar{X})$ and $\pi_1^{\mathrm{\acute{e}t}}(\bar{Y})$ that is equivariant with respect to the outer action of $G_K$. Indeed, since in that case $\pi_1^{\et}(Y)$ can be reconstructed from $\pi_1^{\et}(\bar{Y})$ by the Cartesian diagram
on the left

\begin{minipage}{0.45\textwidth}
\small
\begin{tikzcd}[row sep=small, column sep=small]
\pi_1^{\et}(Y) \arrow[d] \arrow[r]                                  & G_K \arrow[d]                                                  \\
\mathrm{Aut}(\pi_1^{\et}(\bar{Y})) \arrow[r] \arrow[d, "-\circ f"]  & \mathrm{Out}(\pi_1^{\et}(\bar{Y})) \arrow[d, "-\circ f"]       \\
{\mathrm{Hom}(\pi_1^{\et}(\bar{X}),\pi_1^{\et}(\bar{Y}))} \arrow[r] & {\mathrm{Hom}(\pi_1^{\et}(\bar{X}),\pi_1^{\et}(\bar{Y}))/\sim}
\end{tikzcd}

\end{minipage}
\hfill
\begin{minipage}{0.45\textwidth}
\small
\begin{tikzcd}[row sep=small, column sep=small]
\pi_1^{\et}(X) \arrow[d] \arrow[r]                                  & G_K \arrow[d]                                                  \\
\mathrm{Aut}(\pi_1^{\et}(\bar{X})) \arrow[d, "f\circ -"]            & \mathrm{Out}(\pi_1^{\et}(\bar{Y})) \arrow[d, "-\circ f"]       \\
{\mathrm{Hom}(\pi_1^{\et}(\bar{X}),\pi_1^{\et}(\bar{Y}))} \arrow[r] & {\mathrm{Hom}(\pi_1^{\et}(\bar{X}),\pi_1^{\et}(\bar{Y}))/\sim}
\end{tikzcd}

\end{minipage}

where $f$ is the given $G_K$-equivariant surjection $\pi_1^{\et}(\bar{X})\rightarrow \pi_1^{\et}(\bar{Y})$. Then the commutative diagram on the right reconstruct the map $\tilde{f}\colon \pi_1^{\et}(X)\rightarrow \pi_1^{\et}(Y)$ from $f$.

From this point of view, our definition of an $\mathbb{C}^*$-equivariant homomorphism between fundamental groups is entirely analogous to the notion of a homomorphism between arithmetic fundamental groups over $G_K$.

Second, given a nontrivial open homomorphism of profinite groups $G_1 \to G_2$, the image necessarily contains a finite-index subgroup of $G_2$; indeed, such subgroups form a basis of the profinite topology on $G_2$.

In this sense, our definition of $\mathrm{Hom}_{\mathbb{C}^*}^{\mathrm{open}}$ is naturally analogous to $\mathrm{Hom}_{G_K}^{\mathrm{open}}$ in the classical anabelian geoemtry.

Finally, we record a basic property of $\mathbb{C}^*$-equivariant homomorphisms of fundamental groups.

\begin{prop}\label{pull back of pvhs}
In the above setting, let $\cL$ be a $\mathbb{C}$-local system on $Y$ that underlies a $\mathrm{PVHS}$. Then for any $f\in \mathrm{Hom}_{\mathbb{C}^*}^{\mathrm{open}}\bigl(\pi_1(X),\pi_1(Y)\bigr)$, the pullback local system $f^{*}\cL$ on $X$ also underlies a $\mathrm{PVHS}$.
\end{prop}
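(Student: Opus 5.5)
The plan is to use the fixed-point characterization of \Cref{action}(3), applied on the Tannakian side rather than on individual points of the moduli space. Let $\rho\colon \pi_1(Y)\to \mathrm{GL}(V)$ be the monodromy representation of $\cL$. It factors through $\pi_1(Y)^{\mathrm{alg}}$, so $\cL$ is a point of $\mathrm{Hom}_{\mathrm{Stack}/\mathbb{C}}(B\pi_1(Y)^{\mathrm{alg}}, B\mathrm{GL}(V))$. The $\mathbb{C}^*$-action on $B\pi_1(Y)^{\mathrm{alg}}$ from \eqref{outer action} acts on this set by precomposition. By construction it agrees with the Higgs rescaling $\theta\mapsto t\theta$ transported through \Cref{simpson nonab}. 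Hence \Cref{action}(3) says: $\cL$ underlies a PVHS exactly when, for every $t\in\mathbb{C}^*$, $t\cdot\cL\simeq\cL$ as local systems.

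The steps, in order:

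\begin{enumerate}
\item \textbf{Equivariance of the pulled-back action.} Write $f^{\mathrm{alg}}\colon B\pi_1(X)^{\mathrm{alg}}\to B\pi_1(Y)^{\mathrm{alg}}$ for the image $\mathrm{alg}(X,Y)(f)$. The pullback $f^*\cL$ is the composite $B\pi_1(X)^{\mathrm{alg}}\to B\pi_1(Y)^{\mathrm{alg}}\to B\mathrm{GL}(V)$. Since $f$ is $\mathbb{C}^*$-equivariant, for each $t$ we have an isomorphism $f^{\mathrm{alg}}\circ t_X\simeq t_Y\circ f^{\mathrm{alg}}$ in $\pi_0$. Precomposing, this gives $t\cdot(f^*\cL)\simeq f^*(t\cdot\cL)$.
\item \textbf{Fixedness of $f^*\cL$.} Since $t\cdot\cL\simeq\cL$, step 1 gives $t\cdot f^*\cL\simeq f^*\cL$ for all $t$.
\item \textbf{Conclusion.} By \Cref{action}(3) applied to $X$, the local system $f^*\cL$ underlies a PVHS.
\end{enumerate}

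\textbf{Main obstacle: semistability.} \Cref{action}(2) is stated on the moduli space of semistable Higgs bundles, where points are S-equivalence classes, i.e.\ semisimple objects. A $\mathbb{C}^*$-fixed point there is only a fixed semisimplification. So the criterion as stated is cleanest when $f^*\cL$ is semisimple. I would handle this in two ways:
\begin{itemize}
\item First try to work in the Tannakian category $\mathrm{Higgs}^{\mathrm{ss},\mathrm{Ch}=0}(X)$ itself. There an isomorphism $(E,t\theta)\simeq(E,\theta)$ for all $t$ gives a $\mathbb{C}^*$-action on $E$. Using it to produce the grading \eqref{graded} is Simpson's argument, which works for arbitrary, not necessarily semisimple, objects in the Tannakian setting.
\item Alternatively, reduce to the semisimple case. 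Pullback of a semisimple local system along a homomorphism whose image has finite index is again semisimple: restriction to a finite-index subgroup preserves semisimplicity, and so does restriction to a subgroup containing one. This is exactly where openness of $f$ enters. A PVHS local system is semisimple by Deligne, so $f^*\cL$ is semisimple and determined by its moduli point.
\end{itemize}

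A secondary subtlety is that the action is only outer, defined up to conjugation. This is harmless, because local systems are themselves only defined up to isomorphism, i.e.\ up to conjugation of the representation.
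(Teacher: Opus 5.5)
Your argument is correct and is essentially what the paper's one-line citation of Simpson's Lemmas 5.5--5.6 packages. There, $\mathbb{C}^*$-equivariance of $f$ transports the fixedness of $\cL$ to $f^*\cL$, and a $\mathbb{C}^*$-fixed \emph{semisimple} local system underlies a PVHS. Your second bullet correctly supplies the semisimplicity: PVHS local systems are semisimple by Deligne, and restriction to the finite-index subgroup $f(\pi_1(X))$ preserves semisimplicity in characteristic $0$. It also makes explicit where openness is used, which the paper leaves implicit.

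You should discard the first bullet, though. The grading does exist, but a graded Higgs bundle yields a PVHS only if it is polystable. For example, $\bigl(\cO_X^{\oplus 2},\bigl(\begin{smallmatrix}0&\alpha\\0&0\end{smallmatrix}\bigr)\bigr)$ with $0\neq\alpha\in H^0(X,\Omega^1_X)$ is $\mathbb{C}^*$-fixed, graded, and semistable with vanishing Chern classes. Yet its local system is a non-split unipotent extension, so it cannot underlie a PVHS.
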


\begin{proof}
This is precisely \cite[Lemma 5.5 and Lemma 5.6]{PMIHES_1992__75__5_0}.
\end{proof}

\section{A Hodge-theoretic version of anabelian geometry}
\subsection{The case of hyperbolic curves}
We now formulate and prove the Hodge-theoretic Hom conjecture for hyperbolic curves.

\begin{thm}\label{anabelian for curve}
    Let $X, Y/\mathbb{C}$ be smooth projective hyperbolic curves. Then the natural map
    \[
      \mathrm{Hom}^{\mathrm{dom}}(X,Y)\simeq \mathrm{Hom}_{\mathbb{C}^{*}}^{\mathrm{open}}\bigl(\pi_1(X), \pi_1(Y)\bigr)
    \]
    is bijective. Here, $\mathrm{Hom}^{\mathrm{dom}}(X,Y)$ denotes the set of dominant morphisms $X\to Y$.
\end{thm}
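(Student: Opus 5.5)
The plan is to build the inverse map using the uniformizing local system $\mathbb{L}_Y$ from \Cref{uniformizing} and to recover a holomorphic map as a period map. First, the map in one direction is well defined. A dominant $g\colon X\to Y$ is a finite surjective map of curves, so $g_*(\pi_1(X))$ has finite index in $\pi_1(Y)$; hence $g_*$ is open. It is $\mathbb{C}^*$-equivariant because pullback $g^*$ of semistable Higgs bundles with vanishing Chern classes commutes with $\theta\mapsto t\theta$. By Simpson's functoriality, $g^*$ on Higgs bundles is compatible with $g^*$ on local systems. Injectivity should follow from classical facts: a nonconstant map of hyperbolic curves is determined up to homotopy by $g_*$, since the curves are $K(\pi,1)$'s. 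A homotopy class contains at most one holomorphic map, because two homotopic holomorphic maps lift to maps $\mathbb{H}\to\mathbb{H}$ at bounded hyperbolic distance, and Schwarz--Pick together with compactness forces equality. Alternatively, injectivity can be deduced from the surjectivity construction, since that construction recovers $g$ canonically from $g_*$.

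For surjectivity, let $f\in\mathrm{Hom}^{\mathrm{open}}_{\mathbb{C}^*}(\pi_1(X),\pi_1(Y))$. Pass if necessary to the $\mathrm{PSL}_2$ version of the statement, or to a double cover so that $\tilde\rho_Y$ exists. By \Cref{pull back of pvhs}, $f^*\mathbb{L}_Y$, that is, the representation $\rho_Y\circ f$, underlies a PVHS on $X$. Openness ensures that $\rho_Y\circ f$ has Zariski dense image in $\mathrm{PSL}_2$: the image of $f$ contains a finite-index subgroup of the Fuchsian group $\rho_Y(\pi_1(Y))$, which is Zariski dense. So this PVHS is irreducible and of the same type as $\mathbb{L}_Y$, with real period domain $\mathbb{H}$. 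It is a weight-$1$ $\mathrm{SL}_2(\mathbb{R})$-VHS, with Higgs bundle $L\oplus L^{-1}$ and $\theta\colon L\to L^{-1}\otimes\Omega^1_X$, where $\theta\neq0$ by irreducibility. Its period map is then a holomorphic, $f$-equivariant map $\tilde p\colon\tilde X\to\mathbb{H}$. It is nonconstant because $\theta\ne 0$, and we choose its orientation (the sign of the Hodge filtration) so that it lands in $\mathbb{H}$ rather than its conjugate.

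Since $\tilde p$ is equivariant for $\rho_Y\circ f$, and $\rho_Y$ identifies $\pi_1(Y)$ with the deck group of $\mathbb{H}\to Y$, the map $\tilde p$ descends to a holomorphic map $g\colon X\to\mathbb{H}/\rho_Y(\pi_1(Y))=Y$, which is nonconstant and hence dominant. On fundamental groups $g_*=f$ up to conjugation, because $\tilde p$ is $f$-equivariant and $\rho_Y$ is faithful. This gives surjectivity.

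The main obstacle I expect is the step asserting that $f^*\mathbb{L}_Y$ carries a PVHS whose period map has the correct shape. A priori the PVHS structure on $\rho_Y\circ f$ is unique only up to shift, with a Hodge decomposition possibly of a different type. I would handle this by arguing that a real rank-$2$ irreducible VHS with infinite monodromy must have Hodge numbers $(1,1)$, so its period domain is $\mathbb{H}$ or $\bar{\mathbb{H}}$. I would also use rigidity: by Schur's lemma, the VHS structure on an irreducible local system is unique up to shift and conjugation, which fixes the orientation.
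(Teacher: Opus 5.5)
Your surjectivity argument is the paper's. You pull back the uniformizing $\mathrm{PSL}_2(\mathbb{R})$-PVHS along $f$ via \Cref{pull back of pvhs}, take the period map $\tilde X\to\mathbb{H}$, and descend through $\mathbb{H}/\rho_Y(\pi_1(Y))\simeq Y$. Your extra checks (well-definedness, Borel density giving irreducibility of $\rho_Y\circ f$, Hodge numbers $(1,1)$, $\theta\neq 0$) fill in what the paper leaves implicit.

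The genuine difference is injectivity. The paper works with $H^1$: $F^*=G^*$ gives $F_*=G_*$ on Jacobians, so by Abel--Jacobi the image of $Y$ in $\Jac(Y)$ would be stable under translation by the class of $F(x_0)-G(x_0)$, forcing $F(x_0)=G(x_0)$. That argument uses only the abelianized map, so it actually shows a dominant map is determined by its action on $H^1$, but it is special to curves. Your first route ($Y$ is a $K(\pi,1)$, so the maps are homotopic, and a homotopy class contains at most one \emph{nonconstant} holomorphic map) needs the full $\pi_1$-class, but it is geometric and extends to other negatively curved targets. Your second option, recovering $g$ canonically from $g_*$ through the period map and uniqueness of the VHS on the irreducible $\rho_Y\circ g_*$, is exactly the argument the paper uses for ball quotients in \Cref{ball quotient anabelian}.

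If you keep the first route, make the Schwarz--Pick step precise, because distance contraction alone does not give it. What you need is this: for lifts $h_0,h_1$ to the disc that are equivariant for the same homomorphism, $-\log(1-\delta^2)$ is subharmonic, where $\delta$ is the pseudo-hyperbolic distance between $h_0$ and $h_1$. This follows from the identity $|1-\bar a b|^2-(1-|a|^2)(1-|b|^2)=|a-b|^2$. The function is deck-invariant, hence constant on compact $X$, and the equality case forces $h_0=h_1$ wherever $h_0'h_1'\neq 0$. Alternatively, cite uniqueness of harmonic maps into negatively curved targets.

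On the point you flag as the main obstacle, your proposed fix does not work as stated. The half-plane is not yours to choose, and uniqueness of the VHS does not decide it. On $X$ the Hodge filtration of the VHS on $\rho_Y\circ f$ is holomorphic and its conjugate is anti-holomorphic, so there is a single candidate. Whether its period map lands in $\mathbb{H}$ or in $\bar{\mathbb{H}}$ must be proved.

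Merely \emph{underlying a PVHS} does not settle it. Take $X=\bar Y$ and $f=\mathrm{id}$, induced by an anti-holomorphic diffeomorphism: then $\rho_Y\circ f$ underlies a PVHS with period map into $\bar{\mathbb{H}}$, and the construction only yields an anti-holomorphic map to $Y$.

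What excludes this is the circle part of $\mathbb{C}^*$-equivariance. For $t\in\mathrm{U}(1)$ the harmonic metric is unchanged, and pullback along an anti-holomorphic $\sigma$ exchanges $\theta$ and $\theta^\dagger$, so $\sigma^*\circ t=\bar t\circ\sigma^*$. If $f=\sigma_*$ were also $\mathbb{C}^*$-equivariant, then $f^*\cL$ would be $\mathrm{U}(1)$-fixed for every semisimple $\cL$. This fails for a non-unitary rank-one $\cL$: its pullback under a finite-index $f$ stays non-unitary, so its Higgs field is nonzero and it is not fixed. The paper's curve proof also passes over this point (it just writes $D\simeq\mathbb{H}$), so this is a shared omission rather than a flaw specific to your approach.
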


\begin{proof}
\emph{Injectivity.} We present a simpler proof here although later the proof of \Cref{ball quotient anabelian} applies here as well. Let $F, G\colon X\to Y$ be dominant morphisms. If the induced maps on fundamental groups $\pi_1(F)$ and $\pi_1(G)$ are equivalent up to conjugation, we will show that $F=G$. Let $x_0\in X$ and set $y_0=G(x_0)$ and $y=F(x_0)$. Consider the induced maps on cohomology
\[
  F^*\colon H^1(Y,\mathbb{C})\to H^1(X,\mathbb{C}),\qquad G^*\colon H^1(Y,\mathbb{C})\to H^1(X,\mathbb{C}).
\]
The condition on fundamental groups implies $F^*=G^*$. By the functoriality of the Hodge decomposition and the Torelli theorem, this implies that the corresponding maps on the Jacobians $F_{*},G_{*}\colon Jac(X)\rightarrow Jac(Y)$ are equivalent.
Moreover, there is a diagram 
\[
\begin{tikzcd}
X \arrow[d, "AJ_{X,x}"'] \arrow[r, "G"', shift right=2] \arrow[r, "F", shift left] & Y \arrow[d, "AJ_{Y,y_0}"', shift right=2] \arrow[d, "AJ_{Y,y_0} + y", shift left] \\
\Jac(X) \arrow[r, "F_*=G_*"]                                                    & \Jac(Y)                                                               
\end{tikzcd}
\]
where the vertical maps are Abel--Jacobi maps, the inner and outer diagrams commute, respectively. Since the Abel--Jacobi maps embed $X$ and $Y$ injectively into their Jacobians, by the dominance of $F$ and $G$, this diagram shows that $Y$ have the same image under $AJ_Y$ and $AJ_Y + y$ in $Jac(T)$. But $Y$ and $Y+y$ intersect in $\Jac(Y)$ only at finitely many points, unless $F(x_0)=y=y_0=G(x_0)$. Thus, since our choice of $x_0$ is arbitrary, this shows $F=G$. 

\emph{Surjectivity.} Let $f\in \mathrm{Hom}_{\mathbb{C}^*}^{\mathrm{open}}\bigl(\pi_1(X),\pi_1(Y)\bigr)$. We construct a dominant morphism $\mathcal{F}\colon X\to Y$ such that the induced homomorphism on fundamental groups $\pi_1(\mathcal{F})$ is (up to conjugation) $f$.

Consider the $\mathrm{PSL}_2(\mathbb{R})$-local system $\cL_Y$ corresponding to $\rho_Y$ in \Cref{uniformizing}; this underlies a $\mathrm{PVHS}$. By \Cref{pull back of pvhs}, the local system on $X$ corresponding to $\rho_Y\circ f$ also underlies a $\mathrm{PVHS}$. This provides a period map on the universal cover $\tilde{X}$:
\[
\begin{tikzcd}
\tilde{X}\simeq\mathbb{H} \arrow[d] \arrow[r,"\tilde{F}"] & D=\mathrm{PSL}_2(\mathbb{R})/\mathrm{SO}(2)\simeq \mathbb{H} \\
X &                                            
\end{tikzcd}
\]
where $\mathbb{H}$ is the upper half-plane and $D$ is the period domain. By openness of $f$, the local system is not unitary, hence $\tilde{F}$ is nonconstant. Upon quotienting by the monodromy group, $\tilde{F}$ induces a holomorphic map
\[
  F'\colon X\longrightarrow D/\Gamma,
\]
where $\Gamma=\mathrm{Im}_{\rho_Y\circ f}\bigl(\pi_1(X)\bigr)$. Since $\rho_Y\bigl(\pi_1(Y)\bigr)$ is a faithful discrete subgroup of $\mathrm{PSL}_2(\mathbb{R})$ (hence acts properly discontinuously on $D$) and $\Gamma$ is a subgroup thereof, the quotient $D/\Gamma$ is a complex manifold. Composing with
\[
  G\colon D/\Gamma\longrightarrow D/\rho_{Y}\bigl(\pi_1(Y)\bigr)\simeq \mathbb{H}/\pi_1(Y)\simeq Y
\]
we obtain $F=G\circ F'\colon X\to Y$. By construction, the induced map on fundamental groups $\pi_1(F)$ is equivalent to $f$. This completes the proof.
\end{proof}

\subsection{Higher-dimensional anabelian geometry}
Instead of hyperbolic curves, a natural higher dimensional generalization is complex hyperbolic manifolds of ball quotient type. These are naturally $K(\pi,1)$ spaces, and of ``hyperbolic nature'' which makes them possible to be anabelian. We will confirm this speculation to some extend in this section.

Recall that the $n$ dimensional \textit{complex hyperbolic ball} is the domain
 $$\mathbb{D}^n\coloneq \{z\in \mathbb{C}^n| |z|^2<1\}.$$
It has holomorphic automorphism group $\mathrm{Aut}(\mathbb{D}^n)\simeq \mathrm{PU}(n,1)$ and carries the Bergman metric 
$$h=\frac{(1-|z|^2)\underset{i}{\Sigma}dz_i\tensor d\bar{z}_i+ (\underset{i}{\Sigma}\bar{z}_idz_i)\tensor(\underset{i}{\Sigma}z_id\bar{z}_i)}{(1-|z|^2)^2}$$
of constant sectional curvature $-1$. We call a compact Kähler manifold $B$ is of \textit{ball quotient type} if $B\simeq \mathbb{D}^n/\Gamma$ for some co-finite volume discrete subgroup $\Gamma\subset \mathrm{PU}(n,1)$. 
We will prove in this section that the ball quotient types are Hodge theoretic anabelian, more precisely, we have the following.
\begin{thm}\label{ball quotient anabelian}
    Let $X$ be a compact Kähler variety and $Y$ be complex hyperbolic manifolds of ball quotient type. Then the natural morphism 
    \[
      \pi\colon \mathrm{Hom}(X,Y)\rightarrow \mathrm{Hom}_{\mathbb{C}^*}\bigl(\pi_1(X),\pi_1(Y)\bigr)
    \]
induces aa isomorphism $$\pi^{-1}(\mathrm{Hom}_{\mathbb{C}^*}^{\mathrm{open}}(\pi_1(X),\pi_1(Y)))\simeq \mathrm{Hom}_{\mathbb{C}^*}\bigl(\pi_1(X),\pi_1(Y)\bigr) $$
Moreover, assume $X$ is also of ball quotient type, then it induces an isomorphism
 $$\mathrm{Isom}(X,Y)\simeq \mathrm{Isom}_{\mathbb{C}^*}(\pi_1(X),\pi_1(Y)).$$
\end{thm}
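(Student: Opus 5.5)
The plan has two parts: construct a holomorphic map from an equivariant open homomorphism, then prove injectivity by a rigidity argument. The statement is garbled as written (the target should be $\mathrm{Hom}^{\mathrm{open}}_{\mathbb{C}^*}$), so we prove that $\pi$ restricts to a bijection
\[
\pi^{-1}\bigl(\mathrm{Hom}^{\mathrm{open}}_{\mathbb{C}^*}(\pi_1(X),\pi_1(Y))\bigr)\longrightarrow \mathrm{Hom}^{\mathrm{open}}_{\mathbb{C}^*}\bigl(\pi_1(X),\pi_1(Y)\bigr).
\]
Write $Y=\mathbb{D}^n/\Gamma_Y$ and let $\rho_Y\colon\pi_1(Y)\simeq\Gamma_Y\hookrightarrow \mathrm{PU}(n,1)$ be the uniformizing representation. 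By the converse part of \Cref{simpson prop9.1}, $\rho_Y$ underlies a $\mathrm{PU}(n,1)$-$\mathrm{PVHS}$ whose period map is the identification $\tilde Y\simeq\mathbb{D}^n$.

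\emph{Surjectivity.} Given $f\in\mathrm{Hom}^{\mathrm{open}}_{\mathbb{C}^*}$, \Cref{pull back of pvhs} shows that $\rho_Y\circ f$ also underlies a $\mathrm{PVHS}$ on $X$. Its period map is a holomorphic, horizontal, $\rho_Y\circ f$-equivariant map $\tilde F\colon\tilde X\to\mathbb{D}^n$. For the rank-one period domain $\mathbb{D}^n=\mathrm{PU}(n,1)/\mathrm{U}(n)$ the horizontality condition is vacuous: the Higgs field of \Cref{uniformizing} has only one nonzero graded piece. Since $f(\pi_1(X))\subset\Gamma_Y$ and $\Gamma_Y$ acts properly discontinuously and freely, $\tilde F$ descends to
\[
X\to\mathbb{D}^n/\rho_Y f(\pi_1(X))\to\mathbb{D}^n/\Gamma_Y=Y,
\]
which induces $f$ on $\pi_1$ up to conjugation. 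The point needing care is that the induced map on $\pi_1$ is exactly $f$, not just $\rho_Y\circ f$; this holds because $\rho_Y$ is injective. Openness of $f$ is used only to guarantee the local system is non-unitary, so that the map is nonconstant and lies in the open locus. Strictly, one should also note that a nonconstant equivariant map needs no extra argument: any equivariant holomorphic map realizes $f$.

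\emph{Injectivity.} Suppose $F,G\colon X\to Y$ induce the same homomorphism $f$ up to conjugation, and lift them to $\rho_Y f$-equivariant maps $\tilde F,\tilde G\colon\tilde X\to\mathbb{D}^n$ after adjusting by an element of $\Gamma_Y$. I would argue that both lifts are period maps of the same $\mathrm{PVHS}$ $f^*\rho_Y$. The pullback of the uniformizing $\mathrm{VHS}$ along $F$ has period map $\tilde F$, since the period map of $Y$ is the identity. A $\mathrm{VHS}$ structure on a local system with given monodromy is unique up to the centralizer action: by Deligne–Simpson rigidity, the Hodge filtration corresponds to the unique harmonic metric up to the centralizer of the image. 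Openness makes $\rho_Y f(\pi_1(X))$ Zariski dense in $\mathrm{PU}(n,1)$, because a finite-index subgroup of a lattice is Zariski dense by Borel density. Its centralizer is therefore trivial, so $\tilde F=\tilde G$ and hence $F=G$.

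As a cleaner alternative, I would use the equivariant harmonic-map uniqueness of Corlette and Hartman. $\tilde F$ and $\tilde G$ are equivariant pluriharmonic maps into the negatively curved $\mathbb{D}^n$, and the image group is Zariski dense with no fixed point at infinity. Such maps are unique: otherwise $\tilde F$ and $\tilde G$ would differ by a parallel translation along a geodesic, which is impossible for a Zariski-dense image.

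\emph{Isomorphism statement.} When $X$ is also a ball quotient, apply the first part to an equivariant isomorphism $\phi$ and to $\phi^{-1}$. Both are open, giving maps $F$ and $F'$, and by injectivity $F'\circ F$ and $F\circ F'$ induce the identity on $\pi_1$ and so equal the identity. The main obstacle is the injectivity step, namely justifying rigidity of the period map for a given monodromy; I would rely on the Zariski density of the image together with uniqueness of harmonic metrics.
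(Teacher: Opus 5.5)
Your proof of the main bijection has the same skeleton as the paper's. For surjectivity, you take the period map of a $\mathrm{PU}(n,1)$-PVHS on $\rho_Y\circ f$ and descend it through $\mathbb{D}^n/\rho_Y f(\pi_1(X))\to Y$. For injectivity, you observe that a lift of $F$ is the period map of $F^*\mathbb{L}_Y$, and that this period map is unique.

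You differ from the paper in where uniqueness comes from.
\begin{itemize}
\item The paper passes to a finite étale cover so that $f$ is surjective. Then $\rho_Y\circ f$ is irreducible and its PVHS is unique by \Cref{c star equiv map}, and the paper checks the square pointwise by comparing Hodge flags.
\item You get Zariski density of $\rho_Y f(\pi_1(X))$ directly from Borel density, with no cover needed. You then use uniqueness of equivariant harmonic maps. This is legitimate because $\mathbb{D}^n=\mathrm{PU}(n,1)/\mathrm{U}(n)$ is the symmetric space itself, so the period map is the harmonic metric.
\item Your Hartman variant is cleaner still. $F$ and $G$ are homotopic since $Y$ is aspherical, holomorphic maps of Kähler manifolds are harmonic, the Bergman metric has strictly negative sectional curvature, and openness excludes the degenerate case. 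So injectivity needs no Hodge theory at all.
\end{itemize}

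For the isomorphism clause you also take a different and better route. The paper goes through Whitehead, degree one, birationality and a discrepancy argument with $K_X$. You apply existence to $\phi$ and $\phi^{-1}$, then apply injectivity to $F'\circ F$ and $F\circ F'$; using target $X$ is fine since $X$ is a ball quotient. This avoids the birational geometry entirely.

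One step is not justified as written. \Cref{pull back of pvhs} only says that $\rho_Y\circ f$ underlies \emph{some} PVHS. That does not give a holomorphic map $\tilde X\to\mathbb{D}^n$, because the Hodge type matters.
\begin{itemize}
\item For example, $\mathbb{L}_Y$ viewed on the conjugate manifold $\bar Y$ still underlies a PVHS, with the grading reversed, but its period map into $\mathbb{D}^n$ is antiholomorphic.
\item For $n\geq 2$ there are also signature-$(n,1)$ types with three graded pieces. For these, the natural map from the period domain to $\mathbb{D}^n$ is not holomorphic.
\end{itemize}
Your remark that horizontality is vacuous applies only once you know the PVHS has the same Hodge cocharacter $h$ as $\mathbb{L}_Y$.

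That is exactly what \Cref{c star equiv map} supplies, and it is how the paper argues at this point. Since $\rho_Y^{\mathrm{alg}}$ is $\mathbb{C}^*$-equivariant for the action on $\mathrm{PU}(n,1)$ through $h$, so is $\rho_Y^{\mathrm{alg}}\circ f^{\mathrm{alg}}$. Hence $\rho_Y\circ f$ carries a PVHS of type $h$, whose period domain is $\mathbb{D}^n$ with its given complex structure. With that substitution your proof goes through.
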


Note the last part yields a statement essentially weaker than Siu’s holomorphic Mostow rigidity \cite{Siumostow}. It is plausible that the strategy used in the proof of it could provide an alternative proof of Siu’s result.

\begin{rem}
    The formation of $\Cref{ball quotient anabelian}$ is a bit artificial. This is caused by the fact that it is unclear that given a holomorphic mapping $f\colon X\rightarrow Y$ between ball quotient types, the induced map on fundamental group $\pi_1(f)\colon \pi_1(X)\rightarrow \pi_1(Y)$ being surjective is suffices to show that $f$ is dominant. This will be true to some extend when $f$ is smooth by \cite{koziarz2008nonexistenceholomorphicsubmersionscomplex}.
\end{rem}
Before we proceed to the proof, let us briefly recall the notion of $(G_{\mathbb{R}},h)$-polarized variation of Hodge structures for a real reductive group $G_{\mathbb{R}}$ and a Hodge cocharacter $h\colon \mathbb{S}\simeq \mathrm{Res}_{\mathbb{C}/\mathbb{R}}\mathbb{G}_m\rightarrow G_{\mathbb{R}}$.
 
\begin{definition}Let $G$ be a real reductive group and $h\colon \mathrm{Res}_{\mathbb{C}/
    \mathbb{R}} \mathbb{G}_m\rightarrow G$ be a Hodge cocharacter. 
  \benuma 
  \item 
    A $G$-\textit{variation of Hodge structure} ($G$-$\mathrm{PVHS}$) on a complex manifold $X$ is a $G(\mathbb{R})$-local system $\mathbb{L}$ on $X$ such that for every faithful representation $\rho\colon G\rightarrow \mathrm{GL}(V_{\mathbb{R}})$, the induced local system $\mathbb{L}\times_{G_(\mathbb{R})}V_{\mathbb{R}}$ admits a polarised variation of Hodge structure with respect to the Hodge cocharacter $\rho\circ h$.
    \item Let $H_{h}$ be the subgroup of $G(\mathbb{R})$ that stabilize the Hodge cocharacter $h$. Then the real analytic manifold $D_{G,h}\coloneq G(\mathbb{R})/H_h$ admits a complex manifold structure, and we call it the \textit{Period domain} associated to $(G,h).$ 
    \item  Recall that by \cite{milne2011shimuravarietiesmoduli}*{Theorem 7.3} that for a complex manifold $X$ and its universal cover $\tilde{X}$, a $G$-$\mathrm{PVHS}$ $\mathbb{L}$ on $X$ induces a holomorphic map $f_{\mathbb{L}}\colon \tilde{X}\rightarrow D_{G,h}$ that is equivariant with respect to $\pi_1(X)$ (here, $\pi_1(X)$ acts on $D_{G,h}$ via composing the action of $G(\mathbb{R})$ on $D_{G,h}$ with the underlying representation $\rho_{\mathbb{L}}\colon \pi_1(X)\rightarrow G(\mathbb{R})$). We call this map $f_{\mathbb{L}}$ the \textit{period map} associated to $\mathbb{L}.$
     \eenum
     Composing $h$ with the conjugate action $G\rightarrow \mathrm{Aut}(G)$ we obtain a $\mathbb{C}^*$-action on $G$. We call this the \textit{$\mathbb{C}^*$-action on $G$ associated to $h$}.
\end{definition}
By Tannakian formalism, a $G$-local system $\mathbb{L}$ on a complex Kähler manifold $X$ is equivalent to a morphism of (pro) algebraic groups $\pi_{\mathbb{L}}\colon\pi_1^{\mathrm{alg}}(X)\rightarrow G$ up to conjugation. 
\begin{prop}\label{c star equiv map}
    Let $G$ be a real reductive group and $h\colon \mathrm{Res}_{\mathbb{C}/
    \mathbb{R}} \mathbb{G}_m\rightarrow G$ a Hodge cocharacter. If $\mathbb{L}$ is a local system on a compact Kähler manifold $X$ that corresponds to a morphism of pro-algebraic groups $\pi_{\mathbb{L}}\colon \pi_1^{\mathrm{alg}}(X)\rightarrow G$, then $\mathbb{L}$ admits a $G$-$\mathrm{PVHS}$ structure if and only if that $\pi_{\mathbb{L}}$ is equivariant with respect to the $\mathbb{C}^*$ action. 

    Moreover, if $\mathbb{L}$ is irreducible and $\pi_{\mathbb{L}}$ is $\mathbb{C}^*$ equivariant, then $\mathbb{L}$ admits an unique $G$-$\mathrm{PVHS}$ structure. 
\end{prop}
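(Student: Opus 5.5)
The plan is to reduce both directions to Simpson's characterization of fixed points of the $\mathbb{C}^*$-action, \Cref{action}(3), upgraded from $\mathrm{GL}_n$ to a real reductive $G$ by working with faithful representations. The precise notion of "equivariant" used is this: the $\mathbb{C}^*$-action on $\pi_1^{\mathrm{alg}}(X)$ is only an outer action, and on $G$ we use the action $t\mapsto \mathrm{Ad}(h(t))$ restricted along $\mathbb{C}^*\subset \mathbb{S}(\mathbb{R})$ (or via the $U(1)$ part, extended as in Simpson). Equivariance of $\pi_{\mathbb{L}}$ then means that for each $t$ the morphism $\pi_{\mathbb{L}}\circ t$ is conjugate to $\mathrm{Ad}(h(t))\circ\pi_{\mathbb{L}}$. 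Equivalently, the point $[\pi_{\mathbb{L}}]$ of the moduli space $\mathrm{Rep}_G(\pi_1(X))$ is fixed, compatibly with the given Hodge cocharacter.

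For the direction ($\Rightarrow$), assume $\mathbb{L}$ carries a $G$-$\mathrm{PVHS}$. Choose a faithful representation $\rho\colon G\to \mathrm{GL}(V)$. Then $\rho_*\mathbb{L}$ is a PVHS, so its Higgs bundle is graded as in \eqref{graded}. The grading is given by a $\mathbb{C}^*$-action on $E$ which intertwines $\theta$ with $t\theta$. This automorphism is exactly $\rho\circ h(t)$ acting fiberwise on the Hodge decomposition, since the Hodge filtration of the $G$-PVHS is induced by $h$. On the Tannakian side, this says that the functor "rescale $\theta$" applied to the tensor subcategory generated by $\rho_*\mathbb{L}$ is isomorphic to twisting by $\mathrm{Ad}(h(t))$. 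Passing through the equivalence of \Cref{simpson nonab} and its functoriality in $G$, this gives the equivariance of $\pi_{\mathbb{L}}$.

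For the direction ($\Leftarrow$), fixedness of $\pi_{\mathbb{L}}$ gives, for each faithful $\rho$, that $\rho_*\mathbb{L}$ is a $\mathbb{C}^*$-fixed point. By \Cref{action}(3) it is therefore a PVHS, but a priori with respect to some cocharacter that need not be $\rho\circ h$. To fix this, I would run Simpson's argument internally to $G$. The isomorphisms $(E,\theta)\simeq (E,t\theta)$ are $G$-torsor automorphisms that can be chosen to form a homomorphism $\mathbb{C}^*\to \mathrm{Aut}_G(E)$ after passing to the limit $t\to 0$ (Simpson's limit argument). The equivariance hypothesis then identifies the resulting cocharacter, up to conjugacy, with $h$. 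This yields a reduction of structure to the centralizer of $h$, which is the same data as a $G$-PVHS for $(G,h)$. Compatibility across all faithful representations is automatic, because everything is done at the level of $G$-Higgs torsors, and the polarization comes from the harmonic metric.

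For the uniqueness statement, two $G$-PVHS structures on an irreducible $\mathbb{L}$ give two gradings of the same stable $G$-Higgs bundle. Their ratio is a $\mathbb{C}^*$-family of automorphisms of a stable object, hence central, so the Hodge cocharacters differ by a central cocharacter. The Hodge decompositions, and hence the PVHS structures, therefore coincide; equivalently, the period maps agree. The main obstacle I expect is the ($\Leftarrow$) step: matching the cocharacter coming from the fixed-point structure with the prescribed $h$, and promoting the individual isomorphisms $(E,\theta)\simeq(E,t\theta)$, which are defined only up to automorphism, to an honest algebraic $\mathbb{C}^*$-action on the $G$-torsor. In the non-stable case this needs the Jordan–Hölder or polystable reduction. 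For that case I would first reduce to polystable local systems, where the automorphism group is reductive and Simpson's argument from \cite{PMIHES_1992__75__5_0}*{Lemma 4.1} applies.
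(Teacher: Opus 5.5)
\textbf{There is a genuine gap in ($\Leftarrow$).} It sits at the sentence ``the equivariance hypothesis then identifies the resulting cocharacter, up to conjugacy, with $h$.'' You defined equivariance at the outset as: $\pi_{\mathbb{L}}\circ t$ is conjugate to $\mathrm{Ad}(h(t))\circ\pi_{\mathbb{L}}$. Since $\mathrm{Ad}(h(t))$ is inner, this says exactly that $\pi_{\mathbb{L}}\circ t$ is conjugate to $\pi_{\mathbb{L}}$, i.e.\ that $\mathbb{L}$ is a $\mathbb{C}^*$-fixed point; your own ``equivalently'' sentence concedes this. The hypothesis therefore carries no information about $h$, so there is nothing to match the fixed-point cocharacter against. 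In this reading the implication is in fact false:
\begin{itemize}
\item Take $G=\mathrm{PSL}_2(\mathbb{R})$ with its standard $h$, and put $h'=h^3$. Then $\mathrm{Ad}\,h'(i)=\mathrm{Ad}\,h(i)$ is still a Cartan involution, but $\mathrm{Ad}\circ h'$ has only types $(0,0)$ and $(\pm 3,\mp 3)$. Griffiths transversality therefore forces every $(G,h')$-PVHS to have zero Higgs field, i.e.\ to be unitary.
\item The uniformizing local system of a hyperbolic curve is $\mathbb{C}^*$-fixed and not unitary. So it is ``equivariant'' for $h'$ in your sense, yet carries no $(G,h')$-PVHS.
\item A less artificial instance: a compact ball quotient surface, $G=\mathrm{PU}(2,1)$, and the cocharacter whose period domain is the flag domain $\mathrm{PU}(2,1)/T$. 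The adjoint Higgs bundle of the uniformizing system is stable and already graded with ranks $(2,4,2)$. By the uniqueness argument of your last paragraph, it admits no grading with ranks $(1,2,2,2,1)$.
\end{itemize}

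\textbf{What is missing is a rigidified notion of equivariance.} The statement needs this to be true; the paper's one-line proof leaves it to the citation of Simpson's Lemma 5.8.
\begin{itemize}
\item Use Simpson's honest $\mathbb{C}^*$-action on $\pi_1^{\mathrm{alg}}(X,x)$; the fibre functor $(E,\theta)\mapsto E_x$ is strictly invariant under $\theta\mapsto t\theta$. Ask for a representative satisfying $\pi_{\mathbb{L}}(t\cdot\gamma)=\mu(t)\,\pi_{\mathbb{L}}(\gamma)\,\mu(t)^{-1}$ on the nose, where $\mu\colon\mathbb{G}_{m,\mathbb{C}}\to G_{\mathbb{C}}$ is the Hodge cocharacter attached to $h$.
\item It must be $\mu$, not $h$ itself on $\mathbb{S}(\mathbb{R})=\mathbb{C}^*$. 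Indeed $h(t)$ acts on $V^{p,q}$ by $t^{-p}\bar t^{-q}$, whereas the grading automorphism of a PVHS acts by a power of $t$ depending only on $p$. The same slip occurs in your ($\Rightarrow$) paragraph.
\item With this definition, the obstacle you flagged disappears and no limit argument is needed. By Tannakian duality, $\mu(t)$ is the fibre at $x$ of a unique isomorphism $\phi_t\colon(E,\theta)\xrightarrow{\sim}(E,t\theta)$. Faithfulness of the fibre functor together with $\mu(st)=\mu(s)\mu(t)$ gives $\phi_{st}=\phi_s\phi_t$. The eigen-decomposition of this $\mathbb{C}^*$-action is a grading whose Hodge structure at $x$ is exactly $h$. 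For semisimple $\mathbb{L}$, the usual graded-implies-VHS step then gives the $(G,h)$-PVHS. Changing the representative within its $G(\mathbb{R})$-conjugacy class moves $h$ within $D_{G,h}$.
\end{itemize}

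\textbf{A separate problem with your reduction to the polystable case.} A local system underlying a PVHS is semisimple, and passing to a Jordan--H\"older semisimplification changes $\mathbb{L}$. So ``reduce to the polystable case'' has to be either an argument that equivariance forces semisimplicity, or an added hypothesis such as Zariski-dense monodromy, which is the only case the paper uses.

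Your uniqueness argument for irreducible $\mathbb{L}$ is fine as it stands.
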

\begin{proof}
    This is exactly \cite{PMIHES_1992__75__5_0}*{Lemma 5.8}. 
\end{proof}

\begin{proof}[Proof of \Cref{ball quotient anabelian}]
We first show the surjectivity. That is for any $\mathbb{C}^*$ equivariant open morphism $f\colon \pi_1(X)\rightarrow \pi_1(Y)$, we will construct a map $F\colon X\rightarrow Y$ such that $\pi_1(F)= f$ up to conjugation. Notice that by passing to finite étale cover, one may assume that $f$ is surjective. Consider the representation $\rho_Y\colon \pi_1(Y)\rightarrow \mathrm{PU}(n,1)\simeq \mathrm{Aut}(\mathbb{D}^n)$ induced from the presentation $Y\simeq \mathbb{D}^n/ \pi_1(Y).$ By \Cref{simpson prop9.1}, the $\mathrm{PU}(n,1)$ local system $\mathbb{L}_Y$ corresponds to $\rho_Y$ underlies a $\mathrm{PU}(n,1)$-$\mathrm{PVHS}$ structure, moreover its monodromy group is Zariski dense. By \Cref{c star equiv map}, we see that this gives rise to a $\mathbb{C}^*$-equivariant surjection $\rho_Y^{\mathrm{alg}}\colon \pi_1^\mathrm{alg}(Y)\surjects \mathrm{PU}(n,1)$. 

Since $f$ is $\mathbb{C}^*$ equivariant, we have that $\rho_Y^{\mathrm{alg}}\circ f^{\mathrm{alg}}$ gives a $\mathbb{C}^*$ equivariant surjection $\pi_1^{\mathrm{alg}}(X)\surjects \mathrm{PU}(n,1)$. Thus we have that $\rho_Y\circ f$ corresponds to a $\mathrm{PU}(n,1)$-$\mathrm{PVHS}$ on $X$. Consider the period map of it gives the following holomorphic map\textup{:}
$$X\rightarrow \mathbb{D}^n/\rho_Y\circ f(\pi_1(X)).$$
Further compose with the natural projection $\mathbb{D}^n/\rho_Y\circ f(\pi_1(X))\rightarrow \mathbb{D}^n/\rho_Y(\pi_1(Y))\simeq Y$ gives the desired morphism $F\colon X\rightarrow Y.$ Notice that such a construction is canonical since, by irreducibility of $\rho_Y$ and surjectivity of $f$, $\rho_Y\circ f$ is also irreducible, thus by \Cref{c star equiv map} there is a unique $\mathrm{PU}(n,1)\text{-}\mathrm{PVHS}$ structure associate to $\rho_Y\circ f$.

Now we show the injectivity. Let $F\colon X\rightarrow Y$ be a morphism such that the induced map $\pi_1(F)\colon \pi_1(X)\rightarrow \pi_1(Y)$ is open. Since the image is of finite index, $G$ in fact factors through a finite étale cover $Y'$ of $Y$ such that the induce map on fundamental groups $\pi_1(X)\rightarrow \pi_1(Y')$ is surjective. This reduction allows us to assume $\pi_1(F)$ is surjective. Let $\tilde{F}$ be the map constructed above from $\pi_1(G)$, since such $\tilde{F}$ uniquely determined by $\pi_1(F)$, it is sufficient to show that $F=\tilde{F}$. For this, suffices to show that the following diagram commutes
$$\begin{tikzcd}
\tilde{X} \arrow[r, "P_{F^*{\mathbb{L}_Y}}"] \arrow[d, "\pi_X"] & \mathbb{D}^n \arrow[d] \\
X \arrow[r, "F"']                                             & Y                       
\end{tikzcd}.$$
Here $P_{F^*\mathbb{L}_Y}$ is the period map associated to the $\mathrm{PU}(n,1)\text{-}\mathrm{PVHS}$ $F^*\mathbb{L}_Y$ and $\pi_X$ is the natural projection from the universal cover $\tilde{X}$ of $X$. In fact we shall expand the diagram to clarify the map $\mathbb{D}^n\rightarrow Y$.
$$\begin{tikzcd}
                                                                                                                                     &  & \mathbb{D}^n                                             \\
\tilde{X} \arrow[d, "\pi_X"'] \arrow[rru, "P_{F^*\mathbb{L}_Y}"] \arrow[rr, "(P_{\mathbb{L}_Y})^{-1}\circ P_{F^*{\mathbb{L}_{Y}}}"'] &  & \tilde{Y} \arrow[d, "\pi_Y"] \arrow[u, "P_{\mathbb{L}_Y}"] \\
X \arrow[rr, "F"']                                                                                                                   &  & Y                                                         
\end{tikzcd}$$
Where $P_{\mathbb{L}_Y}$ is the period map associated to $\mathbb{L}$ (which by proof of \cite{Simpsonyangmill}*{Proposition 9.1} is an isomorphism). Since all the maps are holomorphic, it suffices to prove that the diagram commutes on the set theoretic level. That is for any point $\tilde{x}\in \tilde{X}$, $y\coloneq F(\pi_X(\tilde{x}))= \pi_Y\circ (P_{\mathbb{L}_Y})^{-1}\circ P_{F^*{\mathbb{L}_{Y}}} (\tilde{x}).$ Or equivalently, choosing any lift $\tilde{y}\in \tilde{Y}$ of $y$, we need to show that up to $\pi_1(X)$ action, $P_{F^*\mathbb{L}_Y}(\tilde{x})= P_{\mathbb{L}_Y}(\tilde{y}).$ 

Embedding $\mathbb{D}^n$ into the flag variety $\mathbb{P}^n$, suffices to show that the flag on $\pi_X^*(F^*\mathbb{L}_Y\tensor_{\mathbb{R}} \cO_{X})$ restrict to $\tilde{x}$ is, up to $\pi_1(X)$ action, the same as the flag on $\pi_Y^* (\mathbb{L}_Y\tensor_{\mathbb{R}}\cO_Y) $ restricting to $\tilde{y}$. But by functoriality, the flag on $\tilde{x}$ is the same as the flag associated to the $\mathrm{PU}(n,1)\text{-}\mathrm{PHS}$ $\mathbb{L}_Y|_y$. Similarly by functoriality, the flag on $\pi_Y^* (\mathbb{L}_Y\tensor_{\mathbb{R}}\cO_Y) $ restricting to $\tilde{y}$ is, up to the $\pi_1(Y)$ action on trivialization, is the same as the flag on $\mathrm{PU}(n,1)\text{-}\mathrm{PHS}$ $\mathbb{L}_Y|_y$. This finishes the proof of the first part. 

For the second part, notice that both $X$ and $Y$ are $K(\pi_,1)$ spaces, thus by Whitehead theorem a map $f\colon X\rightarrow Y$ that induces isomorphism on fundamental groups induces an isomorphism on cohomology, in particular, an isomorphism $\mathrm{H}^{2n}(f)\colon \mathbb{Z}\simeq\mathrm {H}^{2n}(Y,\mathbb{Z})\rightarrow\mathrm{H}^{2n}(X,\mathbb{Z})\simeq \mathbb{Z}$. Thus the degree of $f$ is $1$, which implies that $f$ is birational. Therefore we have that $$K_X\sim f^*K_Y+ \underset{i}{\Sigma}a_i E_i$$ for $a_i\geq 0$ and $E_i$ are exceptional divisors. But now for any contracting curve $C\subset X$ (i.e., $f(C)=\mathrm{pt}$), we have that 
$$0<K_X\cdot C=f^*K_Y\cdot C + =(\underset{i}{\Sigma}a_i E_i)\cdot C\leq 0$$
which contradicts unless $f$ is isomorphism.

\end{proof}

\subsection{Anabelian phenomenon for homotopy type}
We end with some speculation about higher-dimensional Hodge-theoretic anabelian geometry for non-$K(\pi,1)$ spaces.

In \cite{Schmidt_2016}, the following higher-dimensional version of the Hom conjecture was proved.

\begin{thm}
    Let $K/\mathbb{Q}$ be a finitely generated extension of $\mathbb{Q}$. Let $X$ and $Y$ be smooth, geometrically connected varieties over $K$ which can be embedded as locally closed subschemes into a product of hyperbolic curves over $K$. Then
    \[
      \mathrm{Isom}_{K}(X,Y)\longrightarrow \mathrm{Isom}_{(\Spec K)_{\et}^{\sim}}\bigl((X_{\et})^{\sim},(Y_{\et})^{\sim}\bigr)
    \]
    admits a retraction. Here, $(-)_{\et}^{\sim}$ denotes the étale homotopy type of schemes.
\end{thm}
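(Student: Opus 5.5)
The plan is to notice that the statement asks only for a \emph{retraction}, and a retraction of a map of sets with nonempty source exists as soon as the map is injective. So I would not try to reconstruct isomorphisms from homotopy types. I would only prove that $\mathrm{Isom}_K(X,Y)\to \mathrm{Isom}_{(\Spec K)_{\et}^{\sim}}\bigl((X_{\et})^{\sim},(Y_{\et})^{\sim}\bigr)$ is injective. Then I would define the retraction by sending each element of the image to its unique preimage and everything else to a fixed chosen isomorphism. If $\mathrm{Isom}_K(X,Y)=\emptyset$, the retraction exists only when the target is also empty; I would treat this as part of the intended convention of the statement.

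For injectivity, fix a locally closed embedding $\iota\colon Y\hookrightarrow \prod_{i=1}^r C_i$ into a product of hyperbolic curves over $K$, and write $p_i=\mathrm{pr}_i\circ\iota\colon Y\to C_i$. Let $f,g\colon X\to Y$ be $K$-isomorphisms whose induced maps on étale homotopy types over $(\Spec K)_{\et}^{\sim}$ are homotopic. Passing to $\pi_1$, the two maps $\pi_1^{\et}(X)\to\pi_1^{\et}(C_i)$ induced by $p_i\circ f$ and $p_i\circ g$ agree up to $\pi_1^{\et}(\bar C_i)$-conjugation over $G_K$. Because $\iota$ is a monomorphism, it then suffices to prove $p_i\circ f=p_i\circ g$ for every $i$. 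I would split into two cases according to whether $p_i$ is dominant; since $f,g$ are isomorphisms, $p_i\circ f$ is dominant exactly when $p_i\circ g$ is.

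In the dominant case, $p_i\circ f$ and $p_i\circ g$ are dominant morphisms from the smooth variety $X$ to the hyperbolic curve $C_i$. A finitely generated extension of $\mathbb{Q}$ is sub-$p$-adic, so Mochizuki's Hom theorem applies in its form for smooth sources and hyperbolic curve targets, and this is the point where the generalization of \Cref{main} is needed. Its injectivity half then gives $p_i\circ f=p_i\circ g$. In the non-dominant case, $p_i$ is constant on the connected variety $Y$ with value some $c\in C_i(K)$, because $Y$ is geometrically connected. Then both composites equal the constant map to $c$, so they agree automatically and no comparison of $\pi_1$-maps is needed in this case.

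The main obstacle is the dominant case: one needs Mochizuki's theorem for higher-dimensional smooth sources over finitely generated fields, not just the curve-to-curve version stated in \Cref{main}. If only the curve case were available, I would reduce to it by restricting to sufficiently general curves $Z\subset X$. By Bertini, such $Z$ can be chosen so that $\pi_1^{\et}(Z)\to\pi_1^{\et}(X)$ is surjective and so that $p_i\circ f|_Z$ and $p_i\circ g|_Z$ remain dominant. For $Z$ that is not proper, this reduction requires the affine or open-curve version of Mochizuki's theorem. Applying it to each such $Z$ gives $p_i\circ f|_Z=p_i\circ g|_Z$, and since such $Z$ cover a dense subset of $X$ and $C_i$ is separated, we get $p_i\circ f=p_i\circ g$.
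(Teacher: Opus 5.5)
Your injectivity argument is sound. Mochizuki's Hom theorem over sub-$p$-adic fields is already stated for an arbitrary smooth variety as source and a hyperbolic curve as target, so the dominant coordinates are handled directly and the Bertini reduction is unnecessary. The constant coordinates are trivial, as you say. (The paper gives no proof of this statement; it quotes it from Schmidt--Stix.)

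The gap is the case you set aside as a ``convention.'' A map of sets $i\colon A\to B$ admits a retraction if and only if $i$ is injective and either $A\neq\emptyset$ or $B=\emptyset$. So when $\mathrm{Isom}_K(X,Y)=\emptyset$, the statement asserts that $(X_{\et})^{\sim}$ and $(Y_{\et})^{\sim}$ are not equivalent over $(\Spec K)^{\sim}_{\et}$. In other words, the \'etale homotopy type over $K$ determines $X$ up to isomorphism. That is the substantive half of the theorem, not a convention. Schmidt--Stix moreover intend a functorial retraction, which sending everything outside the image to an arbitrarily chosen isomorphism does not give.

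Your method cannot reach this case, because it only ever uses $\pi_1$. Take $X=C_1\times C_2$ and $Y=X\setminus\{y\}$ for a closed point $y$. Purity gives $\pi_1^{\et}(Y)\simeq\pi_1^{\et}(X)$ over $G_K$, yet $X\not\simeq Y$ since only $X$ is proper. So the $\pi_1$-analogue of the statement is false exactly in the empty-source case.

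What is missing is a reconstruction step: from an equivalence $\varphi\colon (X_{\et})^{\sim}\to (Y_{\et})^{\sim}$ over $K$ you must produce an isomorphism $X\to Y$. One natural route applies the surjectivity half of Mochizuki's theorem to the open homomorphisms $\pi_1(p_i)\circ\pi_1(\varphi)$, and uses constant maps on the remaining coordinates, to get a morphism $F\colon X\to\prod_i C_i$. You would then have to show that $F$ factors through the locally closed subvariety $Y$ and is an isomorphism onto it. In the example above, feeding in the $\pi_1$-isomorphism $\pi_1^{\et}(X)\simeq\pi_1^{\et}(Y)$ produces $F=\mathrm{id}_{C_1\times C_2}$, which does not factor through $Y$. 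So this step genuinely requires the higher \'etale homotopy type, and your proposal does not address it.
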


In the Hodge-theoretic context, we have a natural candidate for this ``arithmetic étale homotopy type.''

Recall that for a smooth proper scheme $S$ over $\mathbb{C}$, by \cite{Katzarkov_2008} one can functorially associate to the homotopy type $S(\mathbb{C})^{\sim}$ a $\infty\text{-}$stack $(S(\mathbb{C})^{\sim})^{\mathrm{sh}}\otimes \mathbb{C}$. In \cite{Katzarkov_2008}, it was shown that $(S(\mathbb{C})^{\sim})^{\mathrm{sh}}\otimes \mathbb{C}$ admits a $\mathbb{C}^{*}$-action (with $\mathbb{C}^*$ discrete). In particular, the cohomology of the stack $(S(\mathbb{C})^{\sim})^{\mathrm{sh}}\otimes \mathbb{C}$ is isomorphic to $H^*(S(\mathbb{C})^{\sim},\mathbb{C})$ the complex coefficient Betti cohomology of $S$,  where the $\mathbb{C}^*$ action realizing the Hodge decomposition. Moreover, by \cite{Katzarkov_2008}*{Proposition 1.1.8} there is a canonical isomorphism 
$$\tau_{\leq 1}\footnote{For any $\infty$-stack $X\in \mathrm{Shv}_{\mathrm{fpqc}}(\mathrm{Aff}^{\mathrm{op}},\cS)$ and $i\in \mathbb{N}$, $\tau_{\leq i}(X)$ is the stackification of the prestack sending any affine scheme $\mathrm{Spec}R$ to the $i$-truncated space $\tau_{\leq i}X(R)$.}((S(\mathbb{C})^{\sim})^{\mathrm{sh}}\otimes \mathbb{C})\simeq B \pi_1^{\mathrm{alg}}(X).$$ 
And the $\mathbb{C}^*$ action precisely gives Simpson’s construction in \eqref{outer action}.
\begin{definition}\label{hodge homotopy isom}
    Let $X$, $Y$ be smooth projective schemes over $\mathbb{C}$. We define the \emph{space of $\mathbb{C}^*$-equivariant isomorphisms of homotopy types} $\mathrm{Isom}_{\mathbb{C}^*}\bigl(X(\mathbb{C})^{\sim},Y(\mathbb{C})^{\sim}\bigr)$ by the Cartesian diagram in spaces
    \[
    \begin{tikzcd}
    {\mathrm{Isom}_{\mathbb{C}^*}\bigl(X(\mathbb{C})^{\sim},Y(\mathbb{C})^{\sim}\bigr)} \arrow[d] \arrow[r] & {\mathrm{Isom}_{\mathrm{Stacks}/\mathbb{C}}\bigl((X(\mathbb{C})^{\sim})^{\mathrm{sh}}\otimes \mathbb{C},(Y(\mathbb{C})^{\sim})^{\mathrm{sh}}\otimes \mathbb{C}\bigr)^{\mathbb{C}^{*}}} \arrow[d] \\
    {\mathrm{Isom}\bigl(X(\mathbb{C})^{\sim},Y(\mathbb{C})^{\sim}\bigr)} \arrow[r]                 & {\mathrm{Isom}_{\mathrm{Stacks}/\mathbb{C}}\bigl((X(\mathbb{C})^{\sim})^{\mathrm{sh}}\otimes \mathbb{C},(Y(\mathbb{C})^{\sim})^{\mathrm{sh}}\otimes \mathbb{C}\bigr)}                           
    \end{tikzcd}
    \]
    where in the upper-right corner, $(-)^{\mathbb{C}^*}$ denotes the homotopy fixed points under $\mathbb{C}^*$. 
\end{definition}
We propose the following conjecture \footnote{We thanks Emanuel Reinecke for the suggestion.}:
\begin{conj}
Let $X$ and $Y$ be smooth, geometrically connected varieties over $\mathbb{C}$ which can be embedded as closed subschemes into a product of hyperbolic curves over $\mathbb{C}$. Then
\[
  \mathrm{Isom}_{\mathbb{C}}(X,Y)\longrightarrow \mathrm{Isom}_{\mathbb{C}^*}\bigl(X(\mathbb{C})^{\sim},Y(\mathbb{C})^{\sim}\bigr)
\]
admits a retraction.
\end{conj}
     \bibliographystyle{plain}
\bibliography{Ref.bib}
\end{document}